\newtheorem{lause}{Theorem}               
\newtheorem{maar}[lause]{Definition}    
\par\vspace{2.5mm}\noindent\textit{Proof.\ }}
\par\vspace{2.5mm}}
\numberwithin{equation}{section}
\numberwithin{lause}{section}
\begin{document}
\thispagestyle{empty}


\title{Periodic orbits $1-5$ of quadratic polynomials on a new coordinate plane}

\author{Pekka Kosunen}

\date{\today}

\subjclass[2010]{Primary 37F10; Secondary 30D05}

\keywords{Periodic orbit, iteration, quadratic polynomial, eigenvalue, bifurcation diagram}

\maketitle

\noindent
\address{Department of Physics and Mathematics, University of Eastern Finland, P. O. Box 111, FI-80101 Joensuu, Finland;}
\email{pekka.kosunen@uef.fi}

\begin{abstract}
While iterating the quadratic polynomial $f_{c}(x)=x^{2}+c$ the degree of the iterates grows very rapidly, and therefore solving the equations corresponding to periodic orbits becomes very difficult even for periodic orbits with a low period. In this work we present a new iteration model by introducing a change of variables into an $(u,v)$-plane, which changes situation drastically. As an excellent example of this we can compare equations of orbits period four on  $(x,c)$- and $(u,v)$-planes. In the latter case, this equation is of degree two with respect to $u$ and it can be solved explicitly. In former case the corresponding equation $((((x^{2}+c)^{2}+c)^{2}+c)^{2}+c-x)/((x^{2}+c)^{2}+c-x)=0$ is of degree $12$ and it is thus much more difficult to solve.
\end{abstract}

\section{Introduction}

The dynamics of quadratic polynomials is often studied by using the family of maps  $f_{c}(x)=x^{2}+c$, where $c\in \mathbb{C}$ (see, for example, \cite{4}, \cite{14} and \cite{25}). The orbit of the point $x_{0}\in \mathbb{C}$ is the sequence of points
$x_{0},x_{1},x_{2},\ldots$, where $x_{n}=f_{c}(x_{n-1})=f_{c}^{n}(x_{0})$.
In this work a central role is played by the periodic orbits or cycles of $f_{c}$ for which $f_{c}^{n}(x_{0})=x_{0}$ for some $n\in\mathbb{N}$. Now the number $n$ is the period of the orbit and $x_{0},x_{1},x_{2},\ldots x_{n-1}$ are periodic points of period $n$.

In this paper, we obtain the equations for the periodic orbits $1-5$ of the family $f_{c}$ by iterating the function
\begin{equation}
 G(u,v)=\left(\frac{-u+v+uv}{u},\frac{u^{2}-u+v-u^{2}v-uv+uv^{2}+v^{2}}{u}\right), \label{1}
\end{equation}
and forming the corresponding iterating system on the $(u,v)$-plane (in Section $2$). The function $G$ is a two-dimensional quadratic polynomial map which is defined in the complex 2-space $\mathbb{C}^{2}$ and its iteration reveals the dynamics of $f_{c}$. We will also compare the $(u,v)$-plane model to the $(x,y)$-plane model, which was introduced by Erkama in \cite{1}. The purpose of developing this new model is that we obtain equations of periodic orbits of lower degree, which are easier to handle.

 In the article \cite{1} Erkama studied the dynamics of the polynomial family $f_{c}$ by iterating the function
\begin{displaymath}
 F(x,y)=(y,y^{2}+y-x^{2})
 \end{displaymath}
on the $(x,y)$-plane, thus obtaining
\begin{eqnarray*}
&&P_{0}(x,y)=x_{0}=x , \\
&&P_{n+1}(x,y)=x_{n+1}=P_{n}(x,y)^{2}+y-x^{2} \qquad n=1,2,3,\ldots.
\end{eqnarray*}
 In this case $(x,y)$ is a periodic point of period $n$ so $F^{n}(x,y)=(x,y)$, if and only if $P_{n}(x,y)=x$. The set of periodic points is the union of all orbits whose period divides $n$.
The formula $P_{n}=x_{n}$ was given by
\begin{equation}
x_{n}=x_{1}+(x_{1}-x_{0})\sum_{\nu=1}^{n-1}(x_{0}+x_{n-1})(x_{0}+x_{n-2})\ldots (x_{0}+x_{n-\nu}),\label{3}
\end{equation}
where $n \geq 2$. Periodic orbits form algebraic curves corresponding to their periods. In this paper we present (in Sections $3-6$) the equations of these curves for periods $1-5$ on the $(u,v)$-plane. We also compare the figures of the real curves on the $(u,v)$-plane as well as on the $(x,y)$-plane. Figures and period equations of the $(x,y)$-plane for periods $1-4$ have appeared in \cite{1}, but period five in both cases and all models of the $(u,v)$-plane are previously unpublished. Moreover, we study the locations of neutral, attracting and repulsive fixed points on these curves (\cite{11} and \cite{16}) by using a suitable eigenvalue formula.
By plotting attracting periodic points of the function~$G$ we obtain a new version of the well-known period doubling bifurcation diagram (see, for example, \cite{6}, \cite{17} and \cite{8}) where the algebraic curves are of lower degree than previously (Figure $\ref{kuva19}$). In the case of the function $F(x,y)$ we obtain Figure $\ref{kuva18}$.

\section{A new two-dimensional model for the quadratic family $f_{c}$ on the $(u,v)$-plane}

It is well known that the dynamics of each quadratic polynomial is equivalent to the dynamics of $f_{c}$ for some $c \in \mathbb{C}$. In this section we form a new iteration model in which equations of periodic orbits are of remarkably lower of degree than in the earlier models. The new model is obtained from the $(x,y)$-plane model in \cite{1} by the change of parameters
\begin{eqnarray*}
u&=&x+y \\
v&=&x+y^2+y-x^2.
\end{eqnarray*}
Note that already this model obtained by Erkama on to the $(x,y)$-plane is of lower degree than other earlier models. Let's check first some essential properties of the $(u,v)$-plane.

\begin{lause} \label{lause1.2}
Singularities of the $(x,y)$-plane correspond to points  $(u,v)=(0,0)$ and $u=(\infty,v)$, $v \in \mathbb{C}$ on the $(u,v)$-plane
\end{lause}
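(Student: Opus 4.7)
\medskip
\noindent\emph{Proof proposal.} The plan is to locate the singular set of the change of variables $u = x+y$, $v = x + y^2 + y - x^2$ by explicitly inverting it and reading off where the inverse formulas break down. First I would substitute $y = u - x$ into $v$ and exploit the cancellation $y^2 - x^2 = u(u-2x)$, which reduces $v$ to the linear expression $v = u(u - 2x + 1)$. Solving for $x$ and $y$ then yields
\[
x = \frac{u^{2} + u - v}{2u}, \qquad y = \frac{u^{2} - u + v}{2u}.
\]
Equivalently, the Jacobian determinant of $(x,y)\mapsto(u,v)$ equals $(2y+1)-(1-2x)=2(x+y)=2u$, vanishing precisely on the line $u=0$; the inverse formulas blow up at the same locus and at infinity.

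Next I would analyse the locus $u=0$. There $y=-x$, and substitution into $v$ gives $v = x + x^{2} + (-x) - x^{2} = 0$. Hence the entire line $x+y=0$ of the $(x,y)$-plane collapses to the single point $(u,v)=(0,0)$, which accounts for the first half of the statement. For the behaviour at infinity, I would show that every fixed $v_{0}\in\mathbb{C}$ is realised as a limit: fixing $y_{n}=n$ and solving the quadratic $x_{n}+n^{2}+n-x_{n}^{2}=v_{0}$ produces sequences with $u_{n}=x_{n}+y_{n}\to\infty$ while $v_{n}\equiv v_{0}$. Thus the entire family $(u,v)=(\infty,v)$, $v\in\mathbb{C}$, appears as the singular fibre over infinity.

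The only point requiring real care, rather than a genuine obstacle, is to confirm that the denominator $2u$ in the inverse formulas is the sole source of failure; since the elimination of $y$ reduces to a \emph{linear} equation in $x$, no hidden rational factor is introduced by the substitution, and the classification of singularities is complete.
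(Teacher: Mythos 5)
Your proposal is correct and follows essentially the same route as the paper: both derive $v=u+u^{2}-2ux$ from the substitution $y=u-x$, invert to obtain $x=(u^{2}+u-v)/2u$, $y=(u^{2}-u+v)/2u$, and read off the singular locus $u=0$, $u=\infty$ together with the implication $u=0\Rightarrow v=0$. The extra details you supply (the Jacobian $2u$, the collapse of the line $x+y=0$ to the origin, and the realization of the fibre at infinity) only strengthen the same argument.
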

\begin{proof}
According to
($\ref{3}$) we get
\begin{displaymath}
x_{0}+x_{2}=(x_{0}+x_{1})[1+(x_{1}-x_{0})]=(x_{0}+x_{1})[1+(x_{1}+x_{0})-2x_{0}].
\end{displaymath}
Since $u=x_{0}+x_{1}$ and $v=x_{0}+x_{2}$, it follows by the previous formula that
\begin{displaymath}
v=u[1+u-2x_{0}]=u+u^{2}-2ux.
\end{displaymath}
\clearpage
\begin{figure}[h!]
\begin{center}
\includegraphics[width=0.58\textwidth,angle=270]{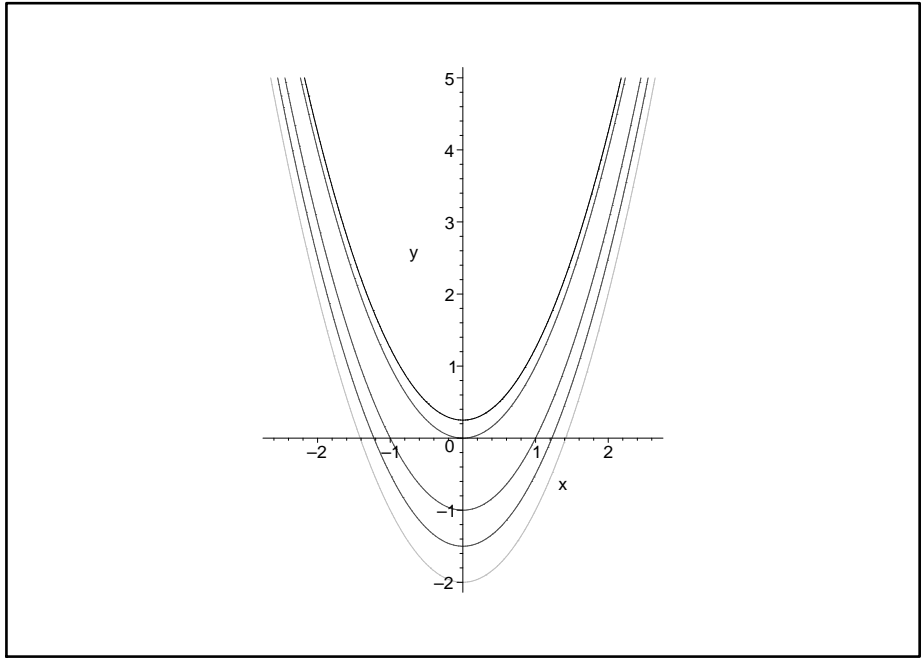}
\caption{Curves $c=y-x^{2}$ on the $(x,y)$-plane with the parameter values $c=1/4$, $c=0$, $c=-1$, $c=-3/2$ and $c=-2$.} \label{kuva2}
\end{center}
\end{figure}
\begin{figure}[h!]
\begin{center}
\includegraphics[width=0.58\textwidth,angle=270]{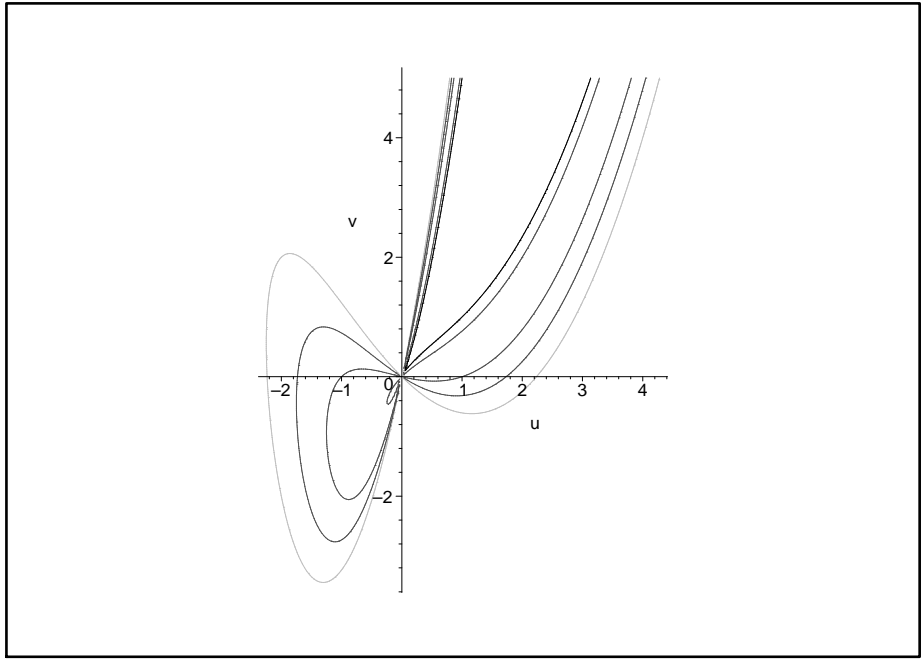}
\caption{Curves $c=(-u^{4}+u^{2}(2v-3)-v^{2}+4uv)/(4u^{2})$ on the $(u,v)$-plane with the parameter values $c=1/4$, $c=0$, $c=-1$, $c=-3/2$ and $c=-2$.} \label{kuva3}
\end{center}
\end{figure}
\clearpage
Now $y=u-x$ so we can solve $x$ and $y$ in terms of variables $u$
and $v$ as follows
\begin{equation}
 x=\frac{u^{2}+u-v}{2u},\qquad y=\frac{u^{2}-u+v}{2u}.\label{4}
 \end{equation}
 We see easily that singularities of both of these expressions are $u=0$ and
$u=\infty$. Furthermore, when $u=0$ it follows that also $v=0$.
\end{proof}
In the $(x,y)$-plane the mapping
\begin{equation}
c:(x,y)\mapsto y-x^{2} \label{4.0}
\end{equation}
produces the parabola  $f_{c}(x)=x^{2}+c$ depending on the point $c$ in the parameter space. In that case the orbits of periodic curves $P_{n}(x,y)=x$ are tangents to parabolas $y=f_{c}(x)$ at intersection points where period doubling occurs (see in \cite{1}, Figure $1$).
The same way on the $(u,v)$-plane using formulas ($\ref{4}$), the mapping
 \begin{equation}
c:(u,v)\mapsto \frac{-u^{4}+u^{2}(2v-3)-v^{2}+4uv}{4u^{2}} \label{4.1}
\end{equation}
produces a curve which corresponds to a point $c$ in the parameter space. Similarly on the $(u,v)$-plane the orbits of periodic curves are tangents to curves ($\ref{4.1}$) at the bifurcation points. Curves $c(x,y)=y-x^{2}$ and $c(u,v)=(-u^{4}+u^{2}(2v-3)-v^{2}+4uv)/(4u^{2})$ are presented (Figure $\ref{kuva2}$) and (Figure $\ref{kuva3}$) with parameter values $c=1/4$, $c=0$, $c=-1$, $c=-3/2$ and $c=-2$.

It was obtained in \cite{1} that the real Mandelbrot set of the function $F(x,y)$ when $c \in \mathbb{R}$ is
\begin{equation}
 \mathrm{M}_{\mathbb{R}}(F)=\{(x,y)|x=0, y\in[-2,1/4]\}. \label{12a}
\end{equation}
On the line $x=0$ lie so called $\displaystyle{central \ critical \ points}$ of $F$ (see theorem ($\ref{lause1.1.1}$)).
This means that there is always one critical point on the curve at every critical cycle of every period.
In what follows we check the Mandelbrot set on the $(u,v)$-plane when
\begin{eqnarray}
u&=&x_{0}+x_{1}=x+y, \nonumber \\
v&=&x_{0}+x_{2}=x+y^2+y-x^2=u(1+y-x). \nonumber
\end{eqnarray}
Now the Mandelbrot set $\mathrm{M(G)}$ of the function $G(u,v)$ includes all points $c \in \mathbb{C}$ such that the sequence $\{G^{n}(c,c^{2}+c)\}$ is bounded.
 The sequence $\{G^{n}(c,c^{2}+c)\}$ is bounded in the space $\mathbb{C}^{2}$, if and only if the sequence $\{f_{c}^{n}(0)\}$ is bounded in the space~$\mathbb{C}$.
\begin{lause} \label{lause1.1}
The real Mandelbrot set of the function $G(u,v)$ is
 \begin{displaymath}
 \mathrm{M}_{\mathbb{R}}(G)=\{(u,v)|v=u^{2}+u, u\in[-2,1/4], v\in[-1/4,2]\}.
\end{displaymath}
\end{lause}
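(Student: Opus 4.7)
The plan is to translate the real Mandelbrot description~($\ref{12a}$) through the coordinate change and read off the image. First observe that choosing $x_{0}=0$ (the critical point of $f_{c}$) yields $x_{1}=c$ and $x_{2}=c^{2}+c$, so the definitions of the new coordinates immediately give $u=x_{0}+x_{1}=c$ and $v=x_{0}+x_{2}=c^{2}+c$. Thus every real parameter $c$ is represented in the $(u,v)$-plane by the single point $(c,c^{2}+c)$, which lies on the parabola $v=u^{2}+u$; equivalently, the line $x=0$ in the $(x,y)$-plane (where the central critical points live) maps exactly onto this parabola under the substitution $u=x+y$, $v=x+y^{2}+y-x^{2}$.

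Next, by the equivalence stated just before the theorem, $\{G^{n}(c,c^{2}+c)\}$ is bounded if and only if $\{f_{c}^{n}(0)\}$ is bounded. Combining this with~($\ref{12a}$), which asserts that the real parameters with bounded critical orbit fill exactly $[-2,1/4]$, we conclude that $u=c$ ranges over the interval $[-2,1/4]$, and no other real point $(u,v)$ can belong to $\mathrm{M}_{\mathbb{R}}(G)$.

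Finally, to verify the $v$-range I would simply minimise and maximise $v(u)=u^{2}+u$ on $[-2,1/4]$. The vertex at $u=-1/2$ gives the minimum $v=-1/4$, while the left endpoint $u=-2$ gives $v=2$ and the right endpoint $u=1/4$ gives only $v=5/16$; hence $v$ takes all values in $[-1/4,2]$ as $u$ sweeps through $[-2,1/4]$. This produces precisely the description claimed in the theorem.

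The whole argument is essentially one substitution combined with the previously established real Mandelbrot interval for $f_{c}$; the only conceptual step is recognising that the $(u,v)$-image of the critical line $x=0$ is the parabola $v=u^{2}+u$, after which the Mandelbrot interval on the $y$-axis transports directly to the $u$-axis, and an elementary optimisation yields the matching $v$-interval. No genuine obstacle is expected.
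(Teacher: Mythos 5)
Your argument is correct and essentially identical to the paper's own proof: both identify the image of the critical line $x=0$ (equivalently the critical orbit points $(c,c^{2}+c)$) as the parabola $v=u^{2}+u$, transport the interval $[-2,1/4]$ from~($\ref{12a}$) to the $u$-coordinate via the boundedness equivalence, and then find the $v$-range by evaluating the endpoints and the vertex at $u=-1/2$. No substantive difference in method or content.
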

\begin{proof}
If $x=0$ on the $(x,y)$-plane, it follows that $u=y$ on the $(u,v)$-plane.
The first equation of ($\ref{4}$) implies $v=u^2+u(1-2x)$ and thus we get $v=u^{2}+u=y^{2}+y$ on the line $x=0$.
Now by ($\ref{12a}$) at the endpoints we have $v(-2)=2$ and $v(\frac{1}{4})=\frac{5}{16}$. When $v'(u)=0$ the solution is $u=-\frac{1}{2}$
and further $v(-\frac{1}{2})=-\frac{1}{4}$. So the real Mandelbrot set on the $(u,v)$-plane is on the parabola $v=u^{2}+u$ (see Figure $\ref{kuva140}$), which is also so called critical curve of the function $G$ like we see by the next result.
\end{proof}
This curve $v=u^{2}+u$ includes also all central critical points on $(u,v)$-plane.
Let us define the universal filled Julia set  $\mathrm{K(G)}$ as the set of all points  $(u,v) \in \mathbb{C}^{2}$ such that the sequence $\{G^{n}(u,v)\}$ is bounded.
Then the Mandelbrot set can also be described as the intersection of $K(G)$ with the parabola $v=u^2+u$.

\begin{lause} \label{lause1.1.1}
The central critical points of $G(u,v)$ lie on the parabola
$v=u^2+u$.
\end{lause}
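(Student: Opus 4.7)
The plan is to reduce the statement to the already-established fact that the central critical points of $F$ on the $(x,y)$-plane lie on the line $x=0$, and then transport that condition through the change of variables $u=x+y$, $v=x+y^2+y-x^2$.

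First I would recall why the central critical points of $F$ lie on $x=0$: the critical point of the one-dimensional map $f_{c}(x)=x^2+c$ is the solution of $f_{c}'(x)=2x=0$, namely $x=0$, and lifting this to the Erkama coordinates $(x,y)=(x_{0},x_{1})$ gives the line $x=0$. This observation has already been used implicitly in the discussion preceding Theorem \ref{lause1.1} and in the proof of that theorem, so I would cite it rather than re-derive it.

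Second, I would transport the condition $x=0$ to the $(u,v)$-plane. From the inversion formulas (\ref{4}),
\begin{displaymath}
x=\frac{u^{2}+u-v}{2u},\qquad y=\frac{u^{2}-u+v}{2u},
\end{displaymath}
the equation $x=0$ becomes $v=u^{2}+u$ directly (after clearing the denominator $2u$, which is legitimate away from the singular locus $u=0$ identified in Theorem \ref{lause1.2}). Equivalently, and more transparently, one can simply substitute $x=0$ into the defining relations $u=x+y$ and $v=x+y^{2}+y-x^{2}$, yielding $u=y$ and $v=y^{2}+y=u^{2}+u$.

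The argument is essentially a one-line change-of-variables computation; there is no real obstacle beyond making precise what a \emph{central critical point} of $G$ is. I would handle this by declaring that a central critical point of $G$ is, by definition, the image under the coordinate transformation of a central critical point of $F$ (i.e.\ a point with $x=0$), so the statement then reduces to the computation above. If a more intrinsic definition is preferred, one notes that the critical locus of $G$ on $\mathbb{C}^{2}$ pulls back to the vanishing of the Jacobian of the composition, which in the $(x,y)$-coordinates contains the factor $x$ coming from $f_{c}'(x)=2x$; the central component is precisely $x=0$, and the computation above finishes the proof.
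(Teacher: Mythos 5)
Your proposal is correct and follows essentially the same route as the paper: identify the central critical locus of $F$ as the line $x=0$, then transport it through the inversion formulas (\ref{4}) to get $v=u^{2}+u$. One caution: the fact that the central critical points of $F$ lie on $x=0$ cannot be ``cited'' from the discussion preceding Theorem \ref{lause1.1}, because that passage itself refers forward to Theorem \ref{lause1.1.1} for exactly this fact; the paper proves it here by expanding $\lambda_{n}(x,y)$ via the chain rule and observing that, since $\partial P_{1}/\partial x=0$, the only surviving term carries the factor $4xP_{n-1}$, which is the rigorous form of your $f_{c}'(x)=2x$ remark --- so the fallback Jacobian-factorization argument in your last paragraph is the one to keep, not the citation.
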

\begin{proof}
The eigenvalue $\lambda_{n}(x,y)$ of $F^{n}(x,y)=(P_{n}, P_{n}^{2}+y-x^{2})$ of periodic point $(x,y)$ of period $n$ is the Jacobian $det(J_{F})$ (\cite{1})
\begin{equation}
\lambda_{n}(x,y)=\frac{\partial P_{n}(x,y)}{\partial
x}+2x\frac{\partial P_{n}(x,y)}{\partial y}. \label{3.1}
\end{equation}
For the critical points we have $\lambda_{n}(x,y)=0$, $n=1,2,3,\ldots$.
 When $P_{n}=P_{n-1}^{2}+y-x^{2}$ we obtain, based on the chain rule,
 \begin{eqnarray*}
\lambda_{n}(x,y)&=&\frac{\partial P_{n}(x,y)}{\partial x}+2x\frac{\partial P_{n}(x,y)}{\partial y}\\
&=&2P_{n-1}(x,y)\frac{\partial P_{n-1}(x,y)}{\partial x}\frac{\partial P_{n-2}(x,y)}{\partial x} \ldots \frac{\partial P_{1}(x,y)}{\partial x}-2x \\
&&+2x\left(2P_{n-1}(x,y)\frac{\partial P_{n-1}(x,y)}{\partial y}\frac{\partial P_{n-2}(x,y)}{\partial y} \ldots   \frac{\partial P_{1}(x,y)}{\partial y}+1\right)\\
&&=2P_{n-1}(x,y)\left(\frac{\partial P_{n-1}(x,y)}{\partial x}\frac{\partial P_{n-2}(x,y)}{\partial x} \ldots \frac{\partial P_{1}(x,y)}{\partial x}\right)\\
&&+4xP_{n-1}(x,y)\left(\frac{\partial P_{n-1}(x,y)}{\partial y}\frac{\partial P_{n-2}(x,y)}{\partial y} \ldots   \frac{\partial P_{1}(x,y)}{\partial y}\right).
\end{eqnarray*}
Since $\partial P_{1}(x,y)/\partial x=0$ and $\partial
P_{1}(x,y)/\partial y=1$,
the critical points lie on the line $x=0$. Now by ($\ref{4}$) we get $(u^{2}+u-v)/2u=0$, so
$v=u^2+u$.
\end{proof}

In our new $(u,v)$-plane model we write
\begin{eqnarray*}
(u_{0},v_{0}) &=& (x_{0}+x_{1},x_{0}+x_{2})=(x+y,x+y^{2}+y-x^{2})\\
(u_{1},v_{1}) &=& (x_{1}+x_{2},x_{1}+x_{3}) \\
 &=& (y+y^{2}+y-x^{2},y+(y^{2}+y-x^{2})^{2}+y-x^{2}).
\end{eqnarray*}
By using formulas ($\ref{4}$) we get
\begin{displaymath}
u_{1}=\frac{-u+v+uv}{u}
\end{displaymath}
and
\begin{displaymath}
v_{1}=\frac{u^{2}-u+v-u^{2}v-uv+uv^{2}+v^{2}}{u}.
\end{displaymath}
Now the dynamics of the $(u,v)$-plane is determined be the iteration of the function $G$ defined by
\begin{eqnarray*}
 G(u,v)&=&\left(\frac{-u+v+uv}{u},\frac{u^2-u+v-u^2v-uv+uv^2+v^2}{u}\right) \\
 &=&(v+v/u-1,(v+v/u-1)(1+v-u)) \\
 &=&(g_{1},g_{1}(1+v-u)),
\end{eqnarray*}
where
\begin{displaymath}
g_{1}=\frac{-u+v+uv}{u}.
\end{displaymath}
\begin{maar}
If
\begin{eqnarray*}
G(u,v)&=&(R(u,v),Q(u,v)) \\
&=&\left(\frac{-u+v+uv}{u},\frac{u^2-u+v-u^2v-uv+uv^2+v^2}{u}\right),
\end{eqnarray*}
then recursively
\begin{equation}
\left\{%
\begin{array}{l}
      (R_{0}(u,v),Q_{0}(u,v))=(u,v), \\ \\
      (R_{1}(u,v),Q_{1}(u,v))=(R(u,v),Q(u,v)), \\ \\
      (R_{n+1}(u,v),Q_{n+1}(u,v))=G(R_{n}(u,v),Q_{n}(u,v)),\label{5}
\end{array}%
\right.
\end{equation}
where
\begin{equation}
\left\{%
\begin{array}{l}
      R_{n+1}(u,v)=Q_{n}(u,v)-1+Q_{n}(u,v)/R_{n}(u,v) \\ \\
      Q_{n+1}(u,v)=R_{n+1}(u,v)(1+Q_{n}(u,v)-R_{n}(u,v)),\label{6}
\end{array}%
\right.
\end{equation}
when $n=0,1,2,\ldots$.
\end{maar}
 Now $(u,v)$ is a periodic point of period $n$, so $G^{n}(u,v)=(u,v)$, if and only if $(R_{n}(u,v),Q_{n}(u,v))$=$(u,v)$. The set of periodic points is the union of all orbits, whose period divides $n$.

Due to the form of the definition of an eigenvalue on the $(u,v)$-plane, it is convenient to denote
$T_{n+1}(u,v)=1+Q_{n}(u,v)-R_{n}(u,v)$, and write the formula $(\ref{6})$ in the form
\begin{displaymath}
\left\{%
\begin{array}{l}
      R_{n+1}(u,v)=Q_{n}(u,v)-1+Q_{n}(u,v)/R_{n}(u,v) \\ \\
      Q_{n+1}(u,v)=R_{n+1}(u,v)(1+Q_{n}(u,v)-R_{n}(u,v))=R_{n+1}(u,v)T_{n+1}(u,v).\label{622}
\end{array}%
\right.
\end{displaymath} \\
Now the Jacobian of the function $G^{n}(u,v)=G(R_{n-1}(u,v),Q_{n-1}(u,v))$ \\
$=(R_{n}(u,v),Q_{n}(u,v))=(R_{n}(u,v),R_{n}(u,v)T_{n}(u,v))$ is of the form

\begin{displaymath}
\begin{array}{ll}
\displaystyle{\det}(J_{G})&=\displaystyle{\det} \displaystyle{\begin{pmatrix}
  \displaystyle\frac{\partial R_{n}}{\partial u} &  \displaystyle\frac{\partial R_{n}}{\partial v} \\
  \displaystyle\frac{\partial Q_{n}}{\partial u} &  \displaystyle\frac{\partial Q_{n}}{\partial
  v}
\end{pmatrix}}=\displaystyle{\det} \displaystyle{\begin{pmatrix}
  \displaystyle\frac{\partial R_{n}}{\partial u} &  \displaystyle\frac{\partial R_{n}}{\partial v} \\
  \displaystyle\frac{\partial R_{n}}{\partial u}T_{n}+R_{n}\frac{\partial T_{n}}{\partial
  u}   & \displaystyle\frac{\partial R_{n}}{\partial v}T_{n}+R_{n}\frac{\partial T_{n}}{\partial
  v}
\end{pmatrix}}\\ \\
&=\displaystyle{R_{n}\Big(\frac{\partial R_{n}}{\partial u}\frac{\partial T_{n}}{\partial
  v}-\frac{\partial R_{n}}{\partial v}\frac{\partial T_{n}}{\partial  u}\Big)}.
  \end{array}
\end{displaymath}

For the periodic points $(u,v)$ of period $n$ we have $R_{n}(u,v)=u$. Because of this, the eigenvalue $\lambda_{n}(u,v)$ of a periodic point of period $n$ of the function $G^{n}(u,v)$ is of the form
\begin{equation}
    \lambda_{n}(u,v)=u\Big(\frac{\partial R_{n}}{\partial u}\frac{\partial T_{n}}{\partial
  v}-\frac{\partial R_{n}}{\partial v}\frac{\partial T_{n}}{\partial  u}\Big),  \label{50}
\end{equation} \\
when $n=1,2,\ldots$.

On the $(u,v)$-plane fixed points and periodic points are classified in the following way:
\begin{maar} \label{luokitus}
Let us assume that $(u,v) \in \mathbb{C}^{2}$ is a periodic point of period $n$ of the function $G(u,v)$. In that case
$(u,v)$ is
\begin{eqnarray*}
1.&&  \textrm{attracting}, \ \textrm{if} \ 0<|\lambda_{n}(u,v)|<1, \\ \nonumber
2.&&  \textrm{super-attracting or critical}, \ \textrm{if} \ \lambda_{n}(u,v)=0, \\ \nonumber
3.&&  \textrm{repulsive}, \ \textrm{if} \ |\lambda_{n}(u,v)|>1, \\ \nonumber
4.&&  \textrm{indifferent (neutral)}, \ \textrm{if} \ |\lambda_{n}(u,v)|=1, \nonumber
\end{eqnarray*}
when $n=1,2,3, \ldots$.
 \end{maar}

\section{Periodic orbits of periods $1-2$}

In this section we obtain periodic orbit equations of period one and two, and classify these points based on definition $\ref{luokitus}$ on the $(u,v)$-plane.

Fixed points of period one on the $(u,v)$-plane satisfy a pair of equations \\ $G(R_{0}(u,v),Q_{0}(u,v))=(u,v)$, where
\begin{equation}
\left\{%
\begin{array}{l}
      R_{1}(u,v)=(-u+v+uv)/u=u \\
      Q_{1}(u,v)= (-u^{2}+u-v+u^{2}v+uv-uv^{2}-v^{2})/u=v.\label{61}
\end{array}%
\right.
\end{equation}
The first equation is equivalent with
\begin{displaymath}
u^{2}+u-v-uv=(u-v)(1+u)=0.
\end{displaymath}
When $u=-1$, the second equation is not satisfied so $v=u$ is the only solution for both of them. Thus $v=u$ is the equation of period one orbits.
We obtain neutral periodic points of period one when $v=u$, and by ($\ref{50}$) the eigenvalue is
\begin{displaymath}
|\lambda_{1}(u,v)|=\left|\frac {{u}^{2}+u-v}{u}\right|=1,
\end{displaymath}
 and thus these are $(u,v)=(-1,-1)$ and $(u,v)=(1,1)$ (corresponding on the $(x,y)$-plane to $(x,y)=(-1/2,-1/2)$ and $(x,y)=(1/2,1/2)$).
Thus the only real bifurcation point is $(u,v)=(-1,-1)$ (corresponding on the $(x,y)$-plane to $(x,y)=(-1/2,-1/2)$), which we obtained as the intersection of period one and two orbits (Figures $\ref{kuva9}$ and $\ref{kuva9.1}$).
The origin is the only critical point (or super-attracting point) and it is located on the parabola $v=u^{2}+u$. Because of that the line segment
\begin{displaymath}
\{(u,v)|v=u,u\in[-1,1],v\in[-1,1]\}
\end{displaymath}
is located in the Fatou set. We see this clearly in the bifurcation diagram (Figure $\ref{kuva19}$).
Similarly the origin is the only critical point on the $(x,y)$-plane.
 Naturally the complement
 \begin{displaymath}
V_{1}(u,v) \setminus \{(u,v)|v=u,u\in[-1,1],v\in[-1,1]\},
\end{displaymath}
where $V_{1}(u,v)$ is an affine variety of period one, is located in the Julia set.

As for periodic points of period two, they satisfy the pair of equations $G^{2}=(u,v)$, or
\begin{equation}
\left\{%
\begin{array}{ll}
      R_{2}(u,v)=&(-u+v-u^{2}v+uv^{2}+v^{2})/u=u \\
      Q_{2}(u,v)=&(-u^{2}+uv+3u^{2}v-2uv^{2}-u^{3}+v^{4}+v^{3}+u^{4}v^{2}-u^{4}v)/u^{2} \\
&+(3u^{3}v^{2}-2u^{3}v^{3}-4u^{2}v^{3}-uv^{3}+u^2v^{4}+2uv^{4})/u^{2}=v.\label{63}
\end{array}%
\right.
\end{equation}
The first equation can be written in the form
\begin{displaymath}
-(v+1)(u+1)(u-v)=0.
\end{displaymath}
and therefore $u=-1$ is the equation of period two orbits, because it is the only one satisfied by the pair of equations ($\ref{63}$).

We obtain neutral periodic orbits of period two when $u=-1$ in the formula $|\lambda_{2}(u,v)|=1$.
Now
\begin{displaymath}
|\lambda_{2}(u,v)|=\left|\frac {\left({u}^{2}+u-v \right) \left({u}^{2}-u+v \right)}{{u}^{2}}\right|=1,
\end{displaymath} \\
and this is equivalent with the pair of equations
\begin{displaymath}
\left\{%
\begin{array}{l}
      -v^{2}-2v-1=0 \\
      -v^{2}-2v+1=0,
\end{array}%
\right.
\end{displaymath}
when $u=-1$.
 The solutions of the eigenvalue equation $|\lambda_{2}(-1,v)|=1$ are $(u,v)=(-1,-1+\sqrt{2})$, $(u,v)=(-1,-1-\sqrt{2})$ and $(u,v)=(-1,-1)$, which is a double root. The real neutral cycles of period two are thus  $(-1,-1+\sqrt{2}),(-1,-1)$ and $(-1,-1-\sqrt{2}),(-1,-1)$ and these include also a neutral fixed point of period one. The bifurcation points of period two are also real neutral periodic points of period two and they are $(u,v)=(-1,-1+\sqrt{2})$ and $(u,v)=(-1,-1-\sqrt{2})$ on the $(u,v)$-plane (Figure $\ref{kuva9.1}$), as well as  $(x,y)=(\frac{-1-\sqrt{2}}{2},\frac{-1+\sqrt{2}}{2})$ and $(x,y)=(\frac{-1+\sqrt{2}}{2},\frac{-1-\sqrt{2}}{2})$ on the $(x,y)$-plane (Figure  $\ref{kuva9}$).
 Bifurcation points of period two are obtained also as intersection points of period four orbit, because the curve of period two bifurcates to curves of period four. We obtain all critical points of period two when the eigenvalue equation $|\lambda_{2}(u,v)|=0$ holds. In that case ${u}^{2}+u-v=0$ or ${u}^{2}-u+v=0$. When $u=-1$ we obtain cycles of critical points of period two on the $(u,v)$-plane as $(u,v)=(-1,0)$ and $(u,v)=(-1,-2)$ as well as on the $(x,y)$-plane as $(x,y)=(0,-1)$ and $(x,y)=(-1,0)$.
Because $(u,v)=(-1,-1/2)$ is an attracting fixed point ($|\lambda_{2}(-1,-1/2)|<1$), then
\begin{displaymath}
      V_{2}(u,v) \setminus \left(\{(u,v)|u=-1, v \in [-1-\sqrt{2},-1+\sqrt{2}]\}  \right) =V_{2}(u,v) \setminus F(G)
\end{displaymath}
belongs to the Julia set.

When we solve the pair of equations ($\ref{6}$) to higher periods, the situation gets complicated very fast due to the growth of degree in the equations. The following result is very useful in sections $4-6$.
 \begin{lause}
 The pair of equations
\begin{equation}
\left\{%
\begin{array}{ll}
      R_{0}R_{1}R_{2} \cdots R_{n}=&1\\
      Q_{0}Q_{1}Q_{2} \cdots Q_{n}=&1,\label{uviterkaava}
\end{array}%
\right.
\end{equation}
where $n=0,1,2,\ldots$, satisfies the equation of period $n+1$ orbits.
\end{lause}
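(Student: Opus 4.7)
The plan is to reinterpret the rational functions $R_k$ and $Q_k$ in terms of the underlying orbit $x_0, x_1, x_2, \ldots$ of $f_c$, where $x_{k+1} = x_k^2 + c$. By the defining change of variables one has $u = x_0 + x_1$ and $v = x_0 + x_2$, and $G$ was constructed precisely so that applying it to $(x_k+x_{k+1},\, x_k+x_{k+2})$ advances both indices by one. A routine induction on $k$ therefore gives
\[
R_k(u,v) = x_k + x_{k+1}, \qquad Q_k(u,v) = x_k + x_{k+2},
\]
whenever the orbit is defined.

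The second step is to exploit the quadratic form of $f_c$. Subtracting consecutive iterates yields the factorization
\[
x_{k+2} - x_{k+1} = x_{k+1}^2 - x_k^2 = (x_{k+1} - x_k)(x_{k+1} + x_k) = (x_{k+1} - x_k)\,R_k,
\]
and analogously $x_{k+3} - x_{k+1} = x_{k+2}^2 - x_k^2 = (x_{k+2} - x_k)\,Q_k$. This rewrites each factor $R_k$, $Q_k$ as a ratio of consecutive orbit differences, which is exactly the structure needed for telescoping.

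The third step is to collapse the two products. Under the ratio representation,
\[
\prod_{k=0}^{n} R_k = \frac{x_{n+2} - x_{n+1}}{x_1 - x_0}, \qquad \prod_{k=0}^{n} Q_k = \frac{x_{n+3} - x_{n+1}}{x_2 - x_0},
\]
after cancellation of every intermediate factor. On a periodic orbit of period $n{+}1$ one has $x_{n+1} = x_0$, $x_{n+2} = x_1$, and $x_{n+3} = x_2$, so each numerator coincides with its denominator and both products equal $1$. This is exactly the pair (\ref{uviterkaava}).

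The main obstacle is the bookkeeping around vanishing denominators: the ratio form of $R_k$ fails along the locus of fixed points ($x_{k+1}=x_k$), and the ratio form of $Q_k$ fails along the locus of $2$-cycles ($x_{k+2}=x_k$). These exceptional sets are precisely the proper-period-divisor sublocus inside the period-$(n{+}1)$ curve, so the cleanest way to finish is to clear denominators in the telescoping identity before substituting the periodicity relations, or equivalently to extend the identity from the generic locus by Zariski closure. Once this is done, the two equations $R_0 R_1 \cdots R_n = 1$ and $Q_0 Q_1 \cdots Q_n = 1$ follow directly from $G^{n+1}(u,v) = (u,v)$.
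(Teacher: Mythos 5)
Your proof is correct and follows essentially the same route as the paper: both rewrite each factor via $x_k+x_l=(x_{k+1}-x_{l+1})/(x_k-x_l)$ (a consequence of $x_{k+1}-x_{l+1}=x_k^2-x_l^2$) and telescope the two products down to $(x_{n+2}-x_{n+1})/(x_1-x_0)$ and $(x_{n+3}-x_{n+1})/(x_2-x_0)$, which equal $1$ under periodicity. Your additional remarks — the explicit induction giving $R_k=x_k+x_{k+1}$, $Q_k=x_k+x_{k+2}$, and the care about vanishing denominators on the lower-period sublocus — are refinements the paper leaves implicit, not a different argument.
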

\begin{proof}
Let us define $P_{k+1}=x_{k}^{2}+c$ and $P_{l+1}=x_{l}^{2}+c$, when $k,l \in \mathbb{N}$. Now we notice that $P_{k+1}-P_{l+1}=x_{k}^{2}-x_{l}^{2}=(x_{k}-x_{l})(x_{k}+x_{l})$, and so
\begin{equation}
x_{k}+x_{l}=\frac{P_{k+1}-P_{l+1}}{x_{k}-x_{l}}=\frac{x_{k+1}-x_{l+1}}{x_{k}-x_{l}} \label{1a}
\end{equation}
for all $k, l \in \mathbb{N}$. Because $R_{0}=u=x_{0}+x_{1}$ and $Q_{0}=v=x_{0}+x_{2}$, we get the next product by using ($\ref{1a}$), as follows:
\begin{eqnarray*}
R_{0}R_{1}R_{2} \cdots R_{n}&=&(x_{0}+x_{1})(x_{1}+x_{2})(x_{2}+x_{3})\cdots(x_{n}+x_{n+1})\\
&=&\Big(\frac{x_{1}-x_{2}}{x_{0}-x_{1}}\Big)\Big(\frac{x_{2}-x_{3}}{x_{1}-x_{2}}\Big)\Big(\frac{x_{3}-x_{4}}{x_{2}-x_{3}}\Big)\cdots\Big(\frac{x_{n+1}-x_{n+2}}{x_{n}-x_{n+1}}\Big)\\
&=&\frac{x_{n+1}-x_{n+2}}{x_{0}-x_{1}}.
\end{eqnarray*}
Let us assume that $x_{n+1}=P_{n+1}=x_{0}=x$. Then $x_{n+2}=P_{n+2}=x_{n+1}^{2}+c=x_{0}^{2}+c=x_{1}$, so
\begin{displaymath}
R_{0}R_{1}R_{2} \cdots R_{n}=\frac{x_{n+1}-x_{n+2}}{x_{0}-x_{1}}=\frac{x_{0}-x_{1}}{x_{0}-x_{1}}=1
\end{displaymath}
and the assertion follows.
Similarly
\begin{eqnarray*}
Q_{0}Q_{1}Q_{2} \cdots Q_{n}&=&(x_{0}+x_{2})(x_{1}+x_{3})(x_{2}+x_{4})\cdots(x_{n}+x_{n+2})\\
&=&\Big(\frac{x_{1}-x_{3}}{x_{0}-x_{2}}\Big)\Big(\frac{x_{2}-x_{4}}{x_{1}-x_{3}}\Big)\Big(\frac{x_{3}-x_{5}}{x_{2}-x_{4}}\Big)\cdots\Big(\frac{x_{n+1}-x_{n+3}}{x_{n}-x_{n+2}}\Big)\\
&=&\frac{x_{n+1}-x_{n+3}}{x_{0}-x_{2}}=1,
\end{eqnarray*}
when $x_{n+3}=x_{n+2}^{2}+c=(x_{n+1}^{2}+c)^{2}+c=(x_{0}^{2}+c)^{2}+c=x_{1}^{2}+c=x_{2}$.
\end{proof}

\section{Periodic orbits of period $3$}
Fixed points of period three satisfy the pair of equations
\begin{equation}
\left\{%
\begin{array}{ll}
      R_{3}(u,v)=&(-{u}^{2}+uv+{u}^{2}v-{u}^{3}v+{u}^{2}{v}^{2}-u{v}^{2}+{v}^{4}+{v}^{3}\\
                  &+{u}^{4}{v}^{2}-{u}^{4}v+3\,{u}^{3}{v}^{2}-2\,{u}^{3}{v}^{3}-4\,{u}^{2}{v}^{3}-u{v}^{3} \\
                  &+{u}^{2}{v}^{4}+2\,u{v}^{4})/{u}^{2}=u \\ \\
      Q_{3}(u,v)=&(-8\,{u}^{3}{v}^{3}+5\,{u}^{3}{v}^{2}-3\,{u}^{4}v+7\,{u}^{4}{v}^{2}+{u}^{3}v+6\,{u}^{2}{v}^{4} \\
                &-3\,{u}^{2}{v}^{3}+u{v}^{4}-{u}^{4}+{u}^{6}v-{u}^{7}{v}^{2}+{u}^{7}v+{u}^{8}{v}^{4} \\
                &+{u}^{8}{v}^{2}+{u}^{4}{v}^{8}+6\,{u}^{6}{v}^{3}-3\,{u}^{5}v-8\,{u}^{6}{v}^{2}-4\,{u}^{5}{v}^{2} \\
                &+23\,{u}^{5}{v}^{3}+8\,{u}^{4}{v}^{3}-35\,{u}^{4}{v}^{4}-11\,{u}^{3}{v}^{4}-3\,u{v}^{5}+31\,{u}^{3}{v}^{5} \\
                &+10\,{u}^{2}{v}^{5}-11\,{u}^{5}{v}^{4}+6\,{u}^{4}{v}^{5}-6\,u{v}^{6}+6\,{u}^{3}{v}^{6}-15\,{u}^{2}{v}^{6} \\
                &+34\,{v}^{6}{u}^{4}-22\,{v}^{7}{u}^{3}-10\,{v}^{7}{u}^{2}+2\,{v}^{7}u+6\,{v}^{8}{u}^{2}+4\,{v}^{8}u\\
                &-2\,{u}^{8}{v}^{3}+10\,{u}^{7}{v}^{4}-4\,{u}^{7}{v}^{5}-22\,{u}^{6}{v}^{5}-30\,{u}^{5}{v}^{5}\\
                &+6\,{u}^{6}{v}^{6}+26\,{u}^{5}{v}^{6}-6\,{u}^{7}{v}^{3}+17\,{u}^{6}{v}^{4}-4\,{u}^{5}{v}^{7}\\
                &-16\,{u}^{4}{v}^{7}+4\,{u}^{3}{v}^{8}+{u}^{5}+{v}^{8}+2\,{v}^{7}+{v}^{6})/{u}^{4}=v.\label{8}
\end{array}%
\right.
\end{equation}
On the other hand, we know that ($\ref{uviterkaava}$) is satisfied for period three orbits, when
\begin{equation}
1=R_{0}R_{1}R_{2}.\label{a8}
\end{equation}
Using formulas ($\ref{61}$) and ($\ref{63}$), it follows that the previous equation ($\ref{a8}$) takes the form, which can be factorized as
\begin{displaymath}
-{\frac { \left( uv+1+v \right)  \left( -{u}^{2}+{u}^{2}v-u{v}^{2}+uv+
u-{v}^{2} \right) }{u}}=0,
\end{displaymath}
where both of polynomials are undivisible.
This equation is satisfied when
\begin{displaymath}
uv+1+v=0,
\end{displaymath}
or
\begin{displaymath}
 -{u}^{2}+{u}^{2}v-u{v}^{2}+uv+
u-{v}^{2} ={u}^{2}(v-1)+u(1+v-{v}^{2})-{v}^{2}
=0.
\end{displaymath}
Solutions of the latter equation are
\begin{displaymath}
u=\frac{1+v-{v}^{2} \pm \sqrt{{v}^{4}+2\,{v}^{3}-5\,{v}^{2}+2\,v+1}}{2v-2},
\end{displaymath}
which does not satisfy system of equations ($\ref{8}$). Instead, the solution
\begin{equation}
v=-\frac{1}{1+u}, \label{84}
\end{equation}
is the equation for the period three orbits. Moreover we notice that the pair of equations
\begin{displaymath}
\left\{%
\begin{array}{l}
   uv+1+v=0   \\
     {u}^{2}(v-1)+u(1+v-{v}^{2})-{v}^{2}=0
\end{array}%
\right.
\end{displaymath}
and the equation
\begin{displaymath}
\frac{-u^{4}-2u^{3}-u^{2}-u-1}{(u+1)^{2}}=\frac{-{u}^{3}-{u}^{2}-1}{u+1}=0
\end{displaymath}
are equivalent. Both of them have only one real solution. By a numerical approximation we obtain solutions
 \begin{displaymath}
\begin{array}{l}
 (u,v)=(-1.465571232, 2.147899035), \\ (u,v)=(.2327856159-.7925519930i, -.5739495176-.3689894075i),\\ (u,v)=(.2327856159+.7925519930i, -.5739495176+.3689894075i).
\end{array}
\end{displaymath}

 The equation ($\ref{84}$) for period three is of degree three on the $(x,y)$-plane for both variables and its solutions are
\begin{displaymath}
j_{1}=\big(A+\sqrt{B}\big)^{1/3}+\big(A-\sqrt{B}\big)^{1/3}-\frac{2}{3}-\frac{1}{3}x,
\end{displaymath}
\begin{displaymath}
j_{2}=C\big(A+\sqrt{B}\big)^{1/3}+C^{2}\big(A-\sqrt{B}\big)^{1/3}-\frac{2}{3}-\frac{1}{3}x
\end{displaymath}
and
\begin{displaymath}
j_{3}=C^{2}\big(A+\sqrt{B}\big)^{1/3}+C\big(A-\sqrt{B}\big)^{1/3}-\frac{2}{3}-\frac{1}{3}x,
\end{displaymath}
where
\begin{displaymath}
A={\frac {8}{27}}\,{x}^{3}-\frac {2}{9}{x}^{2}-\frac {1}{9}x-{\frac {25}{54}},
\end{displaymath}
\begin{displaymath}
B={\frac {4}{27}}\,{x}^{4}+{\frac {4}{27}}\,{x}^{3}-{\frac {5}{27}}\,{x
}^{2}-\frac {1}{9}\,x-{\frac {23}{108}}
\end{displaymath}
and
\begin{displaymath}
C=-\frac{1}{2}+\frac{\sqrt{3}}{2}i
\end{displaymath}
by Cardano's formulas. There is a remarkable difference compared to the equation (\ref{84}), which is only degree one on the $(u,v)$-plane.

The eigenvalue of period three in the $(u,v)$-plane is
\begin{displaymath}
\lambda_{3}(u,v)=-{\frac {\left (-{u}^{2}-u+v\right )\left
({u}^{2}-u+v\right )\left (- {u}^{2}-u+v+2\,uv\right )}{{u}^{3}}}.
\end{displaymath}
Now we get neutral periodic points by solving the pair of equations
\begin{equation}
\left\{%
\begin{array}{l}
      1+v+vu=0 \\
      |\left ({u}^{2}+u-v\right )\left
({u}^{2}-u+v\right )\left (- {u}^{2}-u+v+2\,uv\right )/{u}^{3}|=1.\label{neutral3}
\end{array}%
\right.
\end{equation}
 Also in the case of period three, neutral fixed points are bifurcation points. The system of equations ($\ref{neutral3}$) produces neutral fixed points and also all cycles of bifurcation points of period three. All bifurcation points of period three (there are $18$ but the points $(u,v)=(-1/2+i\sqrt{3}/2, -1/2+i\sqrt{3}/2)$ and $(u,v)=(-1/2-i\sqrt{3}/2, -1/2-i\sqrt{3}/2)$ are roots of order three) are shown in Table $3.1$. Out of them six are real (two cycles of three points) and eight are complex points. We see the real points in Figure $\ref{kuva8.1}$. Also we see what parts of period three curves belong to the Julia and the Fatou sets. Attracting periodic points form well known windows of period three in the bifurcation diagram, which we see clearly in the lowest part in pictures both on the $(u,v)$-plane (Figure $\ref{kuva19}$) and the $(x,y)$-plane (Figure $\ref{kuva18}$).
\begin{table}[h]
\begin{center}
Table $3.1$.
\end{center}
\begin{center}
 Bifurcation points of period three on the $(u,v)$-plane.
\end{center}
\begin{displaymath}
\begin{array}{|ll|}
   \hline
  u = -1.71331833 & v = 1.401899029 \\\hline  u = -1.801937736 & v = 1.246979604  \\\hline  u = 1.246979604 & v = -.4450418679 \\\hline u = 1.401899029 & v = -.4163372349 \\\hline  u = -.41633724 & v = -1.713318135 \\\hline  u = -.4450418660 & v = -1.801937733  \\\hline u = -.8952981106+1.448231193i & v = -.04966091024+.6869071824i \\\hline u = -.691162865+.49957401i &  v = -.8952981114+1.448231194i \\\hline u = -.049660912+.686907179i & v = -.6911628079+.4995739964i \\\hline  u = -.049660912-.686907179i & v = -.6911628079-.4995739964i \\\hline u = -.691162865-.49957401i & v = -.8952981114-1.448231194i \\\hline u = -.8952981106-1.448231193i & v = -.04966091024-.6869071824i  \\\hline u = -1/2+i\sqrt{3}/2 & v = -1/2+i\sqrt{3}/2 \\\hline
  u = -1/2-i\sqrt{3}/2 & v = -1/2-i\sqrt{3}/2 \\\hline
\end{array}
\end{displaymath}
\end{table}

We obtain all critical points or super-attractive points (Table $3.2$) as solutions of the pair of equations
 \begin{equation}
\left\{%
\begin{array}{l}
      1+v+vu=0 \\
      \left ({u}^{2}+u-v\right )\left
({u}^{2}-u+v\right )\left (- {u}^{2}-u+v+2\,uv\right )=0.\label{2bb}
\end{array}%
\right.
\end{equation}
 But then according to theorem $\ref{lause1.1.1}$  the central critical points of the $(u,v)$-plane lie on the curve $v=u^{2}+u$. Thus we obtain one point at each critical cycle of period three as a solution of the pair of equations
 \begin{equation}
\left\{%
\begin{array}{l}
      1+v+vu=0 \\
      v=u^{2}+u. \label{2abb}
\end{array}%
\right.
\end{equation}
 A great benefit of this method is that the system of equations is of significantly lower degree than system ($\ref{2bb}$). The pair of equations ($\ref{2abb}$) is equivalent with the equation
\begin{displaymath}
u^{3}+2u^{2}+u+1,
\end{displaymath}
and a numerical approximation of its solutions are
\begin{displaymath}
\begin{array}{l}
(u,v)=(-1.754877666,1.324717957),\\ (u,v)=(-.1225611669-.7448617670i, -.6623589783-.5622795121i), \\ (u,v)=(-.1225611669+.7448617670i, -.6623589783+.5622795121i).
\end{array}
\end{displaymath}
The rest of the points of the cycles are naturally obtained by iteration of the function~$G$.

Further, we can also obtain all critical points of period three by calculating intersection points of tangent curves using, for example, the article of Stephenson and Ridgway \cite{3}. In their article Stephenson and Ridgway have listed points of different periods on the line $x=0$ (central critical points $(0,c)$ on the $(x,y)$-plane). These points correspond to the bottom points of parabolas ($\ref{4.0}$) (in the Mandelbrot set), which are tangential with the periodic points curves of the same period. Similarly, on the $(u,v)$-plane, critical points are obtained by formulating the corresponding equation ($\ref{4.1}$) at this point $c$ and finding the pair of solutions $(u,v)$ of this equation ($\ref{4.1}$) and equation of period three orbits ($\ref{84}$). Corresponding curves on the $(u,v)$-plane are unlike on the $(x,y)$-plane, like we see in the Figure $\ref{kuva3}$. In the period five case this property is shown in Figures $\ref{kuva130}$ and  $\ref{kuva140}$.

\begin{table}[h]
\begin{center}
Table $3.2$.
\end{center}
\begin{center}
  Critical points of period three orbits on the $(u,v)$-plane.
\end{center}
\begin{displaymath}
\begin{array}{|ll|}
    \hline
  u = -1.754877667 & v = 1.324717959 \\\hline u = 1.324717957 & v = -.4301597090 \\\hline u = -.4301597100 & v = -1.754877668 \\\hline u = -.6623589786-.5622795121i & v = -.7849201455-1.307141279i \\\hline u = -.7849201452-1.307141279i & v = -.1225611670-.7448617664i \\\hline u = -.7849201455+1.307141279i & v = -.1225611669+.7448617666i \\\hline u = -.6623589786+.5622795121i & v = -.7849201455+1.307141279i \\\hline u = -.1225611669+.7448617670i & v = -.6623589792+.5622795123i \\\hline  u = -.1225611669-.7448617670i & v = -.6623589792-.5622795123i \\\hline
\end{array}
\end{displaymath}
\end{table}

\section{Periodic orbits of period $4$}

Fixed points of period four satisfy the pair of equations
\begin{equation}
\left\{%
\begin{array}{ll}
      R_{4}(u,v)=&u \\
      Q_{4}(u,v)=&v.
\end{array}%
\right.  \label{91}
\end{equation}
In addition,  we know that ($\ref{uviterkaava}$) is satisfied by period four orbits, when
\begin{equation}
1=uR_{1}R_{2}R_{3}. \label{ab}
\end{equation}
When we substitute these formulas of $R_{1}$, $R_{2}$ and $R_{3}$ into ($\ref{ab}$) and factorize, we get
\begin{equation}
\frac{abc}{{u}^{3}}=0,\label{94}
\end{equation}
where
\begin{displaymath}
\begin{array}{ll}
      a=& -(1+u),\\
      b=&-{u}^{2}v+{u}^{2}{v}^{2}-u+uv+u{v
}^{2}-{v}^{2}-{v}^{3}-u{v}^{3}, \\
      c=&-{u}^{4}v+{u}^{4}{v}^{2
}-2\,{u}^{3}{v}^{3}+3\,{u}^{3}{v}^{2}+{u}^{2}{v}^{4}-4\,{u}^{2}{v}^{3}
+2\,{u}^{2}v-{u}^{2}+2\,u{v}^{4}-u{v}^{3} \\
&-2\,u{v}^{2}+{v}^{4}+{v}^{3}.
\end{array}
\end{displaymath}
This formula includes orbits of period two. Equation ($\ref{94}$) is also satisfied when $b=0$ or $c=0$.
Due to the intersection of curves of period four and two at the bifurcation points, period four orbits must be satisfied by $u=-1$.
By substituting this to the equation $b=0$ we get
\begin{displaymath}
-v^{2}-2v+1=0,
\end{displaymath}
the solutions of which are the bifurcation points $v=-1 \pm \sqrt{2}$. Further if we substitute $u=-1$ to the equation $c=0$ we obtain $v-1=0$. This is the asymptotic line for the period four curve. In conclusion, the equation of period four curves is
\begin{displaymath}
-{u}^{2}v+{u}^{2}{v}^{2}-u+uv+u{v
}^{2}-{v}^{2}-{v}^{3}-u{v}^{3}=0,
\end{displaymath}
which can be written as
\begin{equation}
u^{2}(-v^{2}+v)+u(v^{3}-v^{2}-v+1)+v^{3}+v^{2}=0 \label{11}
\end{equation}
and solved explicitly as for factor $u$:
\begin{displaymath}
u=\frac{v^{3}-v^{2}-v+1\pm
\sqrt{v^{6}+2v^{5}-v^{4}-v^{2}-2v+1}}{2(v^{2}-v)}. \label{12}
\end{displaymath}
On the $(x,y)$-plane the degree of the equation of period four orbits is six for both factors, thus it is impossible to solve them explicitly.
The equation of period four orbits has been presented in the literature before now explicitly only twice; T. Erkama~\cite{1} presented nearly the same form at $2006$ and also Morton \cite{2} presented a corresponding solution, but in a more complicated form at $1998$. Morton's solution of period four orbits equation by using model $P(x)=x^{2}+c$ is
\begin{displaymath}
x=\frac{w}{2}\pm \frac{\triangle}{2(w^{3}-w)} ,
\end{displaymath}
where
\begin{displaymath}
\triangle^{2}=(w^{4}-1)(w^{2}+2w-1),
\end{displaymath}
and
\begin{displaymath}
w=x^{4}+2cx^{2}+x+c+c^{2}.
\end{displaymath}

The eigenvalue of period four on the $(u,v)$-plane is
\begin{displaymath}
\lambda_{4}(u,v)=-{\frac {a_{4}b_{4}c_{4}d_{4}}{{u}^{4}}},
\end{displaymath}
where
\begin{eqnarray*}
a_{4}&=&{u}^{2}+u-v ,\\
b_{4}&=&{u}^{2}-u+v,\\
c_{4}&=&2\,uv+v-u-{u}^{2},\\
d_{4}&=&2\,u{v}^{2}+2\,{v}^{2}+v-2\,
uv-u-2\,{u}^{2}v+{u}^{2}.
\end{eqnarray*}
We obtain bifurcation points of period four as solutions of system of equations
\begin{displaymath}
\left\{%
\begin{array}{l}
     u^{2}(-v^{2}+v)+u(v^{3}-v^{2}-v+1)+v^{3}+v^{2}=0 \\
      |\lambda_{4}(u,v)|=1,
\end{array}%
\right.
\end{displaymath}
which have $56$ solutions all together. Real bifurcation points can be seen in Figure $\ref{kuva9.4}$, where they are the intersection points of curves of period four and eigenvalues $|\lambda_{4}(u,v)|=1$. The central critical points are obtained as solutions of the pair of equations
\begin{displaymath}
\left\{%
\begin{array}{l}
     u^{2}(-v^{2}+v)+u(v^{3}-v^{2}-v+1)+v^{3}+v^{2}=0 \\
      v=u^{2}+u,
\end{array}%
\right.
\end{displaymath}
which is equivalent with the equation
\begin{displaymath}
{u}^{7}+3\,{u}^{6}+3\,{u}^{5}+3\,{u}^{4}+2\,{u}^{3}+u=0.
\end{displaymath}
A numerical approximation of its solutions are
 \begin{displaymath}
\begin{array}{l}
 (u,v)=(-1.940799804, 1.825904102), \\ (u,v)=(-1.310702641,.4072387727), \\  (u,v)=(.2822713908-.5300606176i, 0.08098427050-.8293025130i),\\ (u,v)=(.2822713908+.5300606176i, 0.08098427050+.8293025130i),\\ (u,v)=(-.156520166833755-1.03224710892283i, -1.197555698-.7091121296i),\\ (u,v)=(-.156520166833755+1.03224710892283i, -1.197555698+.7091121296i).
\end{array}
\end{displaymath}
In Table $4.2$ all critical points of period four have been presented. There are $24$ points all together, six cycles out of which two is real.


\section{Periodic orbits of period $5$}

Orbits of period five satisfy a pair of equations
\begin{equation}
\left\{%
\begin{array}{ll}
      R_{5}(u,v)=&u\\
      Q_{5}(u,v)=&v.\label{101}
\end{array}%
\right.
\end{equation}
According to ($\ref{uviterkaava}$) the equation
\begin{equation}
1=uR_{1}R_{2}R_{3}R_{4} \label{104}
\end{equation}
contains orbits of period five.
By the formulas ($\ref{6}$) we obtain $R_{4}$ by iteration and using the terms computed earlier. Since ($(R_{4},Q_{4})=G(R_{3},Q_{3})$), it follows that  \\ \\
\begin{eqnarray}
R_{4}&=&\frac {6\,{u}^{4}{v}^{2}+6\,{u}^{6}{v}^{3}+{u}^{4}{v}^{8}+{u}^{8}{v}^{2}+{u}^{8}{v}^{4}+{u}^{7}v-{u}^{7}{v}^{2}+{u}^{3}v+2\,{v}^{7}u}{{u}^{4}}  \nonumber \\
     && +\frac {6\,{v}^{8}{u}^{2}+4\,{v}^{8}u-2\,{u}^{8}{v}^{3}+10\,{u}^{7}{v}^{4}-4\,{u}^{7}{v}^{5}-22\,{u}^{6}{v}^{5}+-30\,{u}^{5}{v}^{5}}{{u}^{4}} \nonumber \\
     &&+\frac {6\,{u}^{6}{v}^{6}+26\,{u}^{5}{v}^{6}-6\,{u}^{7}{v}^{3}+17\,{u}^{6}{v}^{4}-4\,{u}^{5}{v}^{7}-16\,{u}^{4}{v}^{7}+4\,{u}^{3}{v}^{8}}{{u}^{4}} \label{515} \\
     &&+\frac {2\,{u}^{3}{v}^{2}-9\,{u}^{3}{v}^{3}-2\,{u}^{2}{v}^{3}+7\,{u}^{2}{v}^{4}-2\,{u}^{5}v-7\,{u}^{6}{v}^{2}+21\,{u}^{5}{v}^{3}}{{u}^{4}} \nonumber \\
     &&+\frac {4\,{u}^{4}{v}^{3}-34\,{u}^{4}{v}^{4}-9\,{u}^{3}{v}^{4}-3\,u{v}^{5}+31\,{u}^{3}{v}^{5}+10\,{u}^{2}{v}^{5}-11\,{u}^{5}{v}^{4}}{{u}^{4}} \nonumber \\
     &&+\frac {6\,{u}^{4}{v}^{5}-6\,u{v}^{6}+6\,{u}^{3}{v}^{6}-15\,{u}^{2}{v}^{6}+34\,{v}^{6}{u}^{4}-22\,{v}^{7}{u}^{3}-10\,{v}^{7}{u}^{2}}{{u}^{4}} \nonumber \\
     &&+\frac {-{u}^{4}+{v}^{8}+2\,{v}^{7}+{v}^{6}-{u}^{5}{v}^{2}+u{v}^{4}}{{u}^{4}}. \nonumber
\end{eqnarray}

\begin{table}[h]
\begin{center}
Table $4.2$.
\end{center}
\begin{center}
  Critical points of period four on the $(u,v)$-plane.
\end{center}
\begin{displaymath}
\begin{array}{|ll|}
       \hline
u=-.1148957235& v=-.5476738926\\\hline u=3.219030002& v=1.825904091 \\\hline u=1.393125922& v=-.5476738952 \\\hline u=-1.940799804& v=1.825904102 \\\hline u=-.9034638689& v=-2.455561865 \\\hline u=-.7376204516& v=.4072387736 \\\hline u=-1.144859223& v=-2.455561867 \\\hline u=-1.310702641& v=.4072387772 \\\hline u=.3632556612-1.359363131i& v=-.1166414245-1.194442153i \\\hline u=-.3179285453-1.493684049i& v=0.08098427144-.8293025133i \\\hline u=-.3989128144-.6643815355i& v=-.1166414251-1.194442152i \\\hline u=.2822713907-.5300606185i& v=0.08098427018-.8293025128i \\\hline u=.3632556612+1.359363131i& v=-.1166414245+1.194442153i \\\hline u=-.3179285453+1.493684049i& v=0.08098427144+.8293025133i \\\hline u=-.3989128144+.6643815355i& v=-.1166414251+1.194442152i \\\hline u= .2822713907+.5300606185i& v=0.08098427018+.8293025128i \\\hline u=-1.354075865-1.741359238i& v=.6182593037-.3660916740i \\\hline u=-.4227762280-0.04295669499i& v=-1.197555701-.7091121289i \\\hline u=.7747794661+.6661554362i& v=.6182593031-.3660916723i \\\hline u=-.1565201693-1.032247105i& v=-1.197555697-.7091121284i \\\hline u=-1.354075865+1.741359238i& v=.6182593037+.3660916740i \\\hline u=-.4227762280+0.04295669499i& v=-1.197555701+.7091121289i \\\hline u=.7747794661-.6661554362i& v=.6182593031+.3660916723i \\\hline u=-.1565201693+1.032247105i & v=-1.197555697+.7091121284i \\\hline
\end{array}
\end{displaymath}
\end{table}

Next substitute formulas for $R_{1}$, $R_{2}$, $R_{3}$ and $R_{4}$ by using ($\ref{61}$), ($\ref{63}$), ($\ref{8}$) and ($\ref{515}$) to equation ($\ref{104}$) and factorize this irreducible form to obtain
\begin{equation}
\frac{P_{5}L_{5}}{u^{7}}=0,\label{105}
\end{equation}
where
\begin{eqnarray}
     P_{5}&=&-3\,{u}^{5}{v}^{6}+3\,{u}^{4}{v}^{2}-{u}^{7}{v}^{2}+u{v}^{3}-{u}^{6}v-{u}^{5}v+2\,{u}^{3}v-4\,{u}^{3}{v}^{2}-2\,u{v}^{4} \nonumber \\
        &&+2\,{u}^{4}v+6\,{u}^{2}{v}^{4}+2\,{u}^{7}{v}^{3}-{u}^{7}{v}^{4}+{u}^{4}{v}^{7}+{v}^{5}+{v}^{7}+{u}^{6}{v}^{2}+6\,{u}^{2}{v}^{7}+2\,{v}^{6} \nonumber \\
        &&+3\,{u}^{6}{v}^{5}+{u}^{3}+4\,u{v}^{7}+4\,{u}^{3}{v}^{7}-12\,{u}^{4}{v}^{6}+14\,{u}^{5}{v}^{5}-8\,{u}^{6}{v}^{4}-16\,{u}^{3}{v}^{6} \\ \label{123}
        &&-5\,{v}^{3}{u}^{5}-12\,{v}^{4}{u}^{5}+5\,{u}^{6}{v}^{3}+18\,{u}^{4}{v}^{5}+19\,{v}^{4}{u}^{3}-6\,{u}^{2}{v}^{6}+3\,u{v}^{6}+6\,{u}^{4}{v}^{4} \nonumber \\
        &&-16\,{u}^{4}{v}^{3}+7\,{u}^{5}{v}^{2}-4\,{v}^{5}u-12\,{v}^{5}{u}^{2}-5\,{u}^{3}{v}^{3}-2\,{v}^{2}{u}^{2}+4\,{u}^{2}{v}^{3} \nonumber
\end{eqnarray}
and
\begin{eqnarray*}
L_{5}&=& -2\,{u}^{5}v-4\,{u}^{5}{v}^{2}-3\,{v}^{5}u+10\,{u}^{2}{v}^{5}+23\,{u}^{5}{v}^{3}-35\,{u}^{4}{v}^{4}+31\,{u}^{3}{v}^{5}-2\,{u}^{4}v\\
&&+6\,{u}^{4}{v}^{2}-8\,{u}^{3}{v}^{3}+{v}^{4}u+6\,{u}^{2}{v}^{4}+8\,{u}^{4}{v}^{3}-11\,{u}^{3}{v}^{4}+4\,{u}^{3}{v}^{2}-3\,{u}^{2}{v}^{3}\\
&&-6\,{u}^{7}{v}^{3}+17\,{u}^{6}{v}^{4}+{u}^{8}{v}^{2}+{u}^{6}v+{u}^{7}v-10\,{v}^{7}{u}^{2}+2\,{v}^{7}u-2\,{u}^{8}{v}^{3}+{u}^{8}{v}^{4}\\
&&-4\,{u}^{7}{v}^{5}+10\,{u}^{7}{v}^{4}+6\,{u}^{6}{v}^{6}-22\,{u}^{6}{v}^{5}+26\,{u}^{5}{v}^{6}-30\,{u}^{5}{v}^{5}+34\,{u}^{4}{v}^{6}-4\,{u}^{5}{v}^{7}\\
&&-16\,{u}^{4}{v}^{7}-22\,{u}^{3}{v}^{7}+4\,{v}^{8}{u}^{3}+6\,{v}^{8}{u}^{2}+4\,{v}^{8}u+{u}^{4}{v}^{8}-{u}^{7}{v}^{2}-8\,{u}^{6}{v}^{2}+6\,{u}^{6}{v}^{3}\\
&&-11\,{u}^{5}{v}^{4}+6\,{u}^{4}{v}^{5}+6\,{u}^{3}{v}^{6}-15\,{u}^{2}{v}^{6}-6\,u{v}^{6}-{u}^{4}+{u}^{5}+2\,{v}^{7}+{v}^{8}+{v}^{6}.
\end{eqnarray*}
  Here $P_{5}=0$ is an equation of period five orbits. This curve is of degree seven in both variables. Respectively on the $(x,y)$-plane the degree of period five equation is fifteen in both variables and it is of the form
 \begin{eqnarray}
 &&c_{1}{x}^{15}+ c_{2}{x}^{14}+ c_{3}{x}^{13}+ c_{4}{x}^{12}+ c_{5}{x}^{11}+ c_{6}{x}^{10}+ c_{7}{x}^{9}+ c_{8}{x}^{8}+ c_{9}{
x}^{7} \\ \nonumber
&&+ c_{10}{x}^{6}+
 c_{11}{x}^{5}+ c_{12}{x}^{4}+c_{13}{x}^{3}+
 c_{14}{x}^{2}+c_{15}x+c_{16}=0,  \label{12.2}
\end{eqnarray}
where
\begin{eqnarray*}
c_{1}&=& 1  \\
c_{2}&=& y \\
c_{3}&=&  -8\,y-4-7\,{y}^{2} \\
c_{4}&=&  -7\,{y}^{3}-4\,y-8\,{y}^{2}\\
c_{5}&=&  6+48\,{y}^{2}+28\,y+48\,{y}^{3}+21\,{y}^{4} \\
c_{6}&=&   48\,{y}^{3}+6\,y+21\,{y}^{5}+28\,{y}^{2}+48\,{y}^{4}\\
c_{7}&=&  -180\,{y}^{4}-36\,y-102\,{y}^{2}-172\,{y}^{3}-35\,{y}^{6}-120\,{y}^{5}-6\\
c_{8}&=&   -172\,{y}^{4}-180\,{y}^{5}-120\,{y}^{6}-35\,{y}^{7}-102\,{y}^{3}-36\,{y}^{2}-6\,y\\
c_{9}&=&  35\,{y}^{8}+5+160\,{y}^{7}+364\,{y}^{4}+408\,{y}^{5}+320\,{y}^{6}+100\,{y}^{2}+30\,y+224\,{y}^{3}\\
c_{10}&=&  5\,y+320\,{y}^{7}+364\,{y}^{5}+160\,{y}^{8}+30\,{y}^{2}+224\,{y}^{4}+408\,{y}^{6}+35\,{y}^{9}+100\,{y}^{3}\\
c_{11}&=& -2-296\,{y}^{4}-21\,{y}^{10}-540\,{y}^{6}-154\,{y}^{3}-63\,{y}^{2}-472\,{y}^{7}-20\,y-300\,{y}^{8}\\
&&-120\,{y}^{9}-456\,{y}^{5}\\
c_{12}&=& -20\,{y}^{2}-2\,y-154\,{y}^{4}-540\,{y}^{7}-63\,{y}^{3}-296\,{y}^{5}-300\,{y}^{9}-472\,{y}^{8}-456\,{y}^{6}\\
&&-120\,{y}^{10}-21\,{y}^{11}\\
c_{13}&=& 268\,{y}^{9}+366\,{y}^{8}+64\,{y}^{3}+28\,{y}^{2}+1+218\,{y}^{5}+48\,{y}^{11}+7\,{y}^{12}+127\,{y}^{4}+6\,y\\
&&+384\,{y}^{7}+144\,{y}^{10}+316\,{y}^{6} \\
c_{14}&=&  384\,{y}^{8}+28\,{y}^{3}+64\,{y}^{4}+366\,{y}^{9}+218\,{y}^{6}+127\,{y}^{5}+6\,{y}^{2}+316\,{y}^{7}+144\,{y}^{11}\\
&&+7\,{y}^{13}+268\,{y}^{10}+48\,{y}^{12}+y \\
c_{15}&=& -2\,y-116\,{y}^{9}-26\,{y}^{4}-60\,{y}^{11}-5\,{y}^{2}-{y}^{14}-94\,{y}^{7}-8\,{y}^{13}-94\,{y}^{10}-14\,{y}^{3}\\
&&-69\,{y}^{6}-44\,{y}^{5}-1-28\,{y}^{12}-114\,{y}^{8}\\
c_{16}&=& -1-2\,{y}^{2}-y-28\,{y}^{13}-14\,{y}^{4}-5\,{y}^{3}-94\,{y}^{8}-69\,{y}^{7}-44\,{y}^{6}-26\,{y}^{5}\\
&&-116\,{y}^{10}-114\,{y}^{9}-60\,{y}^{12}-94\,{y}^{11}-{y}^{15}-8\,{y}^{14}.
\end{eqnarray*}

  It is well known that orbits of period five are not solvable explicitly.
The eigenvalue of period five on the $(u,v)$-plane is
\begin{equation}
\lambda_{5}(u,v)=\frac{a_{5}b_{5}c_{5}d_{5}e_{5}}{u^{6}},\label{105.1}
\end{equation}
where
\begin{eqnarray*}
a_{5}&=&u^2+u-v\\
b_{5}&=&u^2-u+v\\
c_{5}&=&-{u}^{2}+ \left( 2\,v-1 \right) u+v\\
d_{5}&=&\left( -2\,v+1 \right) {u}^{2}+ \left( 2\,{v}^{2}-2\,v-1 \right) u+2\,{v}^{2}+v\\
e_{5}&=&\left( 2\,{v}^{2}-2\,v \right) {u}^{4}+ \left( -4\,{v}^{3}+6\,{v}^{2}-1 \right) {u}^{3}+ \left( 4\,v+2\,{v}^{4}-8\,{v}^{3}-1 \right) {u}^{2}\\
&&+ \left( v+4\,{v}^{4}-2\,{v}^{3}-4\,{v}^{2} \right) u+2\,{v}^{4}+2\,{v}^{3}.
\end{eqnarray*}
The bifurcation points are produced by the pair of equations
\begin{displaymath}
\left\{%
\begin{array}{ll}
      P_{5}(u,v)&=0\\
      |\lambda_{5}(u,v)|&=1,
\end{array}%
\right.
\end{displaymath}
which have altogether $194$ solutions.
\begin{table}[h]
\begin{center}
Table $5.1$.
\end{center}
\begin{center}
 Real critical points of period five on the $(u,v)$-plane.
\end{center}
\begin{displaymath}
\begin{array}{|ll|}
       \hline

                   u=-1.625413731& v=1.016556055 \\\hline  u=-0.6088576728 & v= -2.217441243  \\\hline      u=0.4245285336 & v=-0.2583610916  \\\hline u=-1.866944664& v=-0.5920275221  \\\hline              u=-1.274917147& v=-2.900330873  \\\hline u=-1.860782519& v=1.601729068\\\hline
                     u=-0.2590534500& v=-1.156029024\\\hline
                     u= 2.306482570& v=0.2376240463\\\hline u=-0.6593515263& v=0.7047534987\\\hline  u=-1.364105025& v=-3.224887544\\\hline u=-1.985424370& v=1.956485394\\\hline
                        u=-0.02893904157& v=-0.1430141249\\\hline     u= 3.798895390& v=3.365536082\\\hline
                       u=3.251460999& v=1.842410112\\\hline  u= 1.409050804& v=-0.5763734824\\\hline
\end{array}
\end{displaymath}
\end{table}

 Real bifurcation points of period five, observed in the Figure $\ref{kuva10.1}$, are the intersection points of period five curves and eigenvalue curves $|\lambda_{5}(u,v)|=1$.  The central critical points are obtained as solutions of the pair of equations of the critical curve $v=u^{2}+u$ and the period five orbits
 \begin{displaymath}
\left\{%
\begin{array}{l}
     P_{5}(u,v)=0 \\
      v=u^{2}+u,
\end{array}%
\right.
\end{displaymath}
which is equivalent with the equation
 \begin{eqnarray}
&&{u}^{18}+8\,{u}^{17}+28\,{u}^{16}+60\,{u}^{15}+94\,{u}^{14}+116\,{u}^{13}+114\,{u}^{12}+94\,{u}^{11}+69\,{u}^{10}  \nonumber \\
&&+44\,{u}^{9}+26\,{u}^{8}+14\,{u}^{7}+5\,{u}^{6}+2\,{u}^{5}+{u}^{4}+{u}^{3}=0. \nonumber
\end{eqnarray}
The real central critical points are (see \cite{3} Table p.$106$ or \cite{4})
\begin{displaymath}
\begin{array}{c}
(u,v)=(-1.625413731, 1.016556055), \\ (u,v)=(-1.860782519, 1.601729068), \\  (u,v)=(-1.985424370, 1.956485394).
\end{array}
\end{displaymath}
Table $5.1$ includes these points with their cycles. The complex solutions are
\begin{displaymath}
\begin{array}{l}
 (u,v)=(.379513588015924+.334932305597498i,.4113645021+.5891550277i),\\ (u,v)=(.359259224758007+.642513737138542i, 0.0755025130+1.104171711i),\\ (u,v)=(-0.0442123577040696+.986580976280892i, -1.015599648+.8993428342i),\\ (u,v)=(-.198042099364254+1.10026953729270i, -1.369414480+.6644701590i),\\ (u,v)=(-.504340175446244+.562765761452982i, -.5666864652-0.0048850042i),\\ (u,v)=(-1.25636793006815+.380320963472707i, .177448410-.5753251598i),\\ (u,v)=(-1.25636793006815-.380320963472707i, .177448410+.5753251598i),\\ (u,v)=(-.504340175446244-.562765761452982i, -.5666864652+0.0048850042i),\\ (u,v)=(-.198042099364254-1.10026953729270i, -1.369414480-.6644701590i),\\ (u,v)=(-0.0442123577040696-.986580976280892i, -1.015599648-.8993428342i),\\ (u,v)=(.359259224758007-.642513737138542i, 0.0755025130-1.104171711i),\\ (u,v)=(.379513588015924-.334932305597498i, .4113645021-.5891550277i).
\end{array}
\end{displaymath}
There are altogether $75$ critical points.



\vspace*{0.1cm}
\begin{center}
''The world around us is very complicated. The tools at our
disposal to describe it are very weak.''
\end{center}
\vspace*{0.1cm}
\begin{center}
Benoit Mandelbrot
\end{center}



\clearpage
\section{Appendix}
Pictures of periodic orbit curves, curves of eigenvalues and bifurcation diagrams on the $(u,v)$- and the $(x,y)$-plane.

\begin{figure}[h!]
\begin{center}
\includegraphics[width=0.57\textwidth,angle=270]{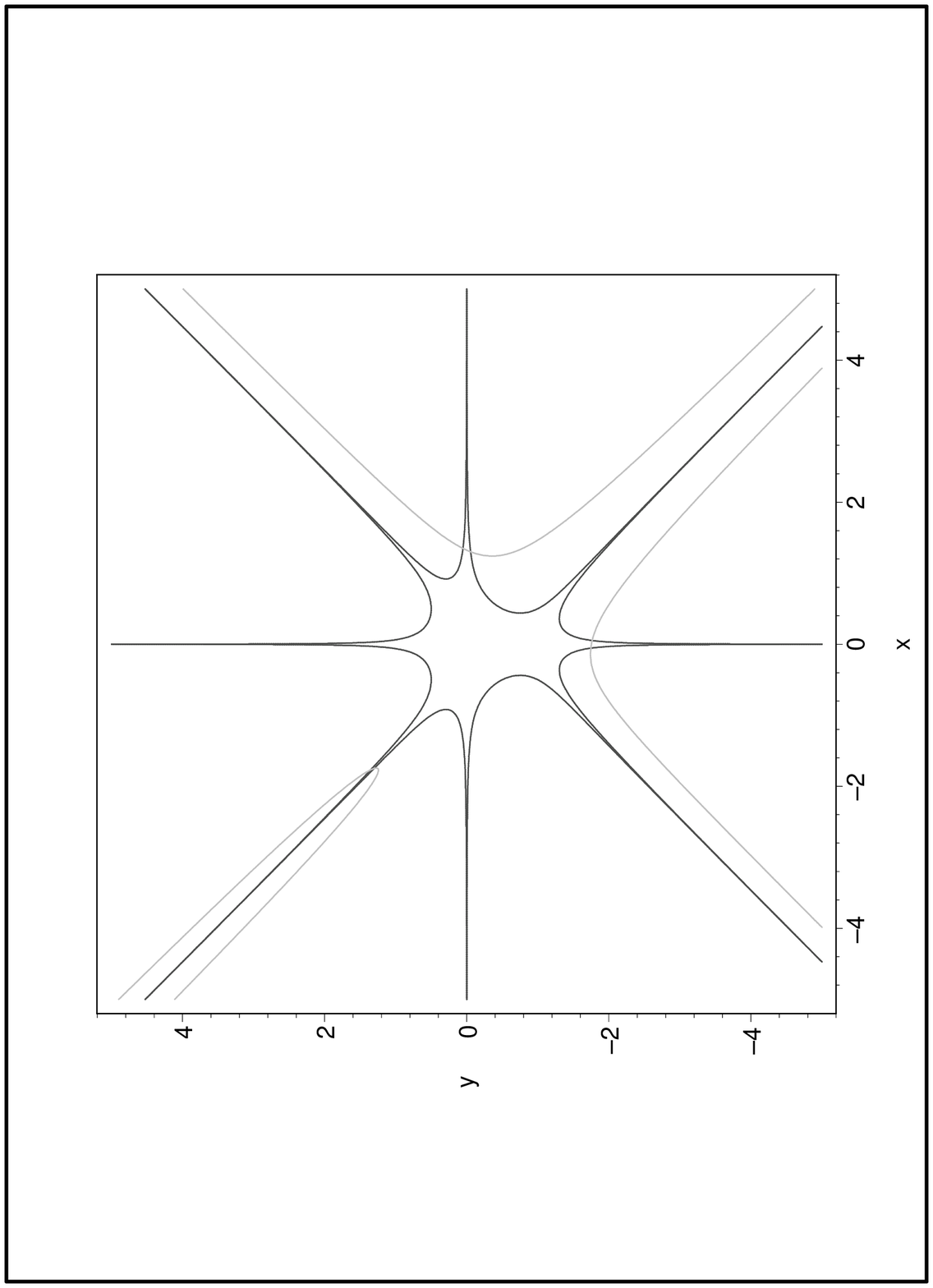}
\caption{Curves of period three orbits and eigenvalues $|\lambda_{3}(x,y)|=1$.} \label{kuva8}
\end{center}
\end{figure}
\begin{figure}[h!]
\begin{center}
\includegraphics[width=0.57\textwidth,angle=270]{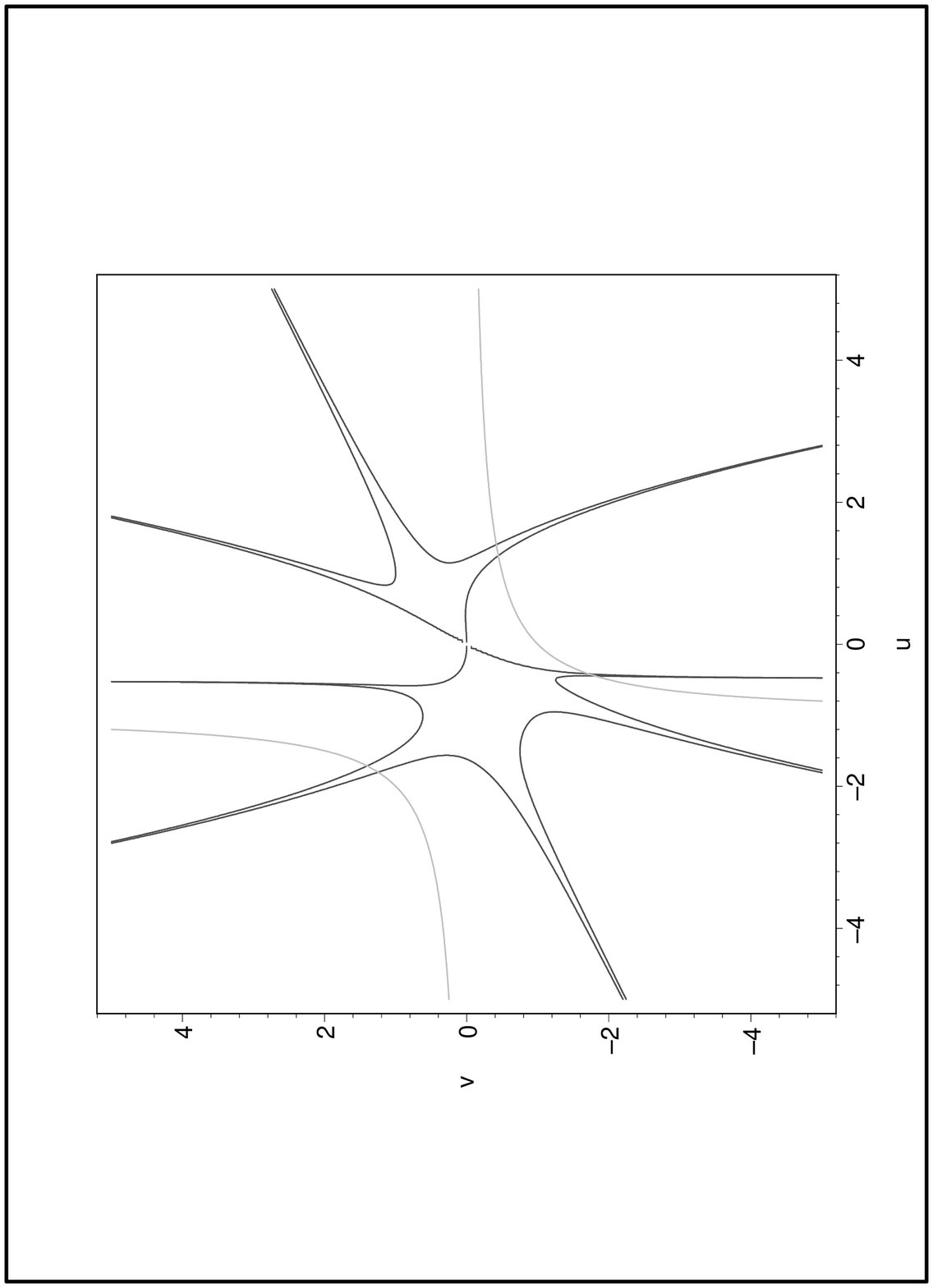}
\caption{Curves of period three orbits and eigenvalues $|\lambda_{3}(u,v)|=1$.} \label{kuva8.1}
\end{center}
\end{figure}
\clearpage

\clearpage

\begin{figure}[h!]
\begin{center}
\includegraphics[width=0.6\textwidth,angle=270]{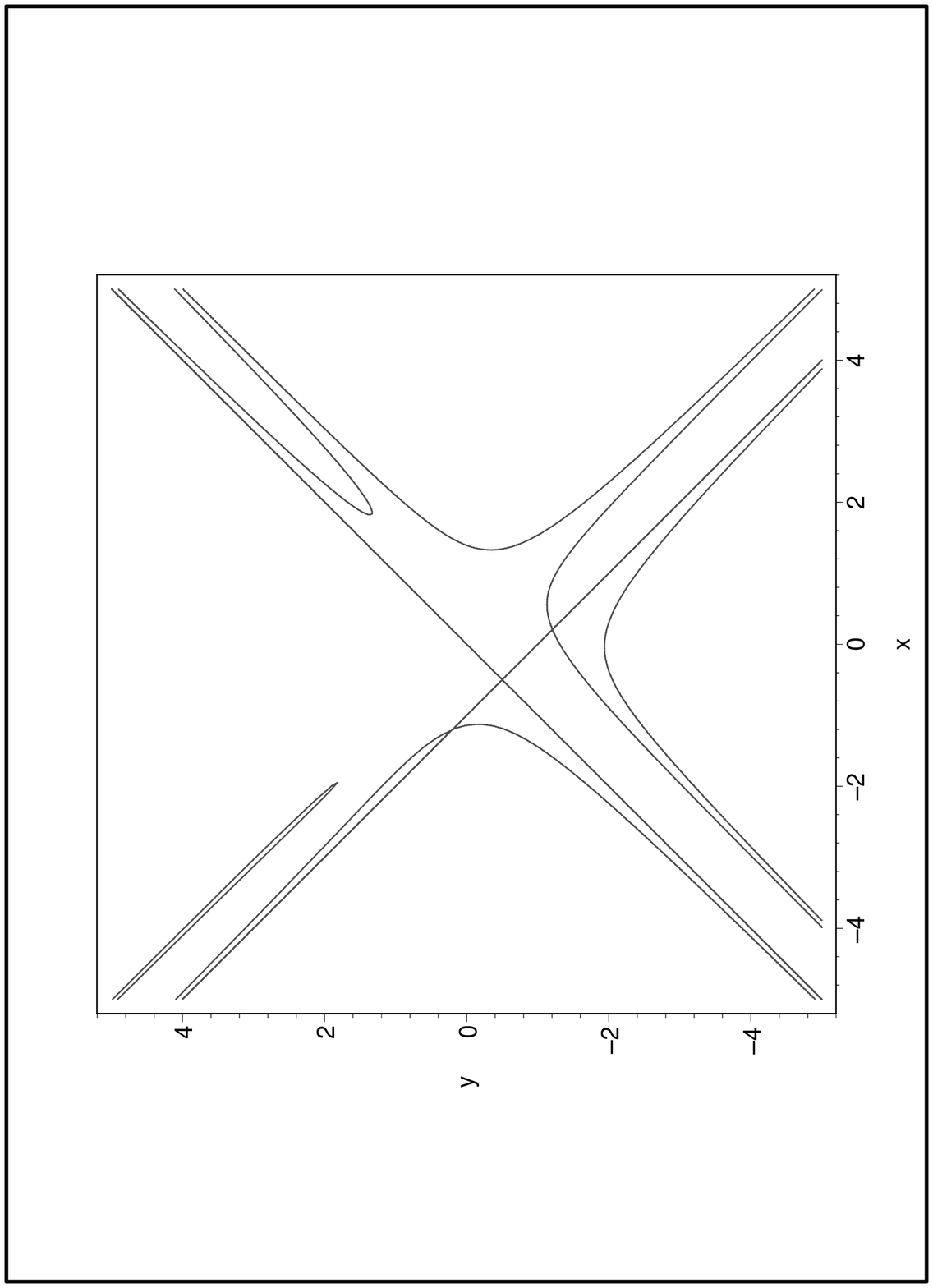}
\caption{Curves of period $1$, $2$ and $4$ orbits on the $(x,y)$-plane.}
\label{kuva9}
\end{center}
\end{figure}
\begin{figure}[h!]
\begin{center}
\includegraphics[width=0.6\textwidth,angle=270]{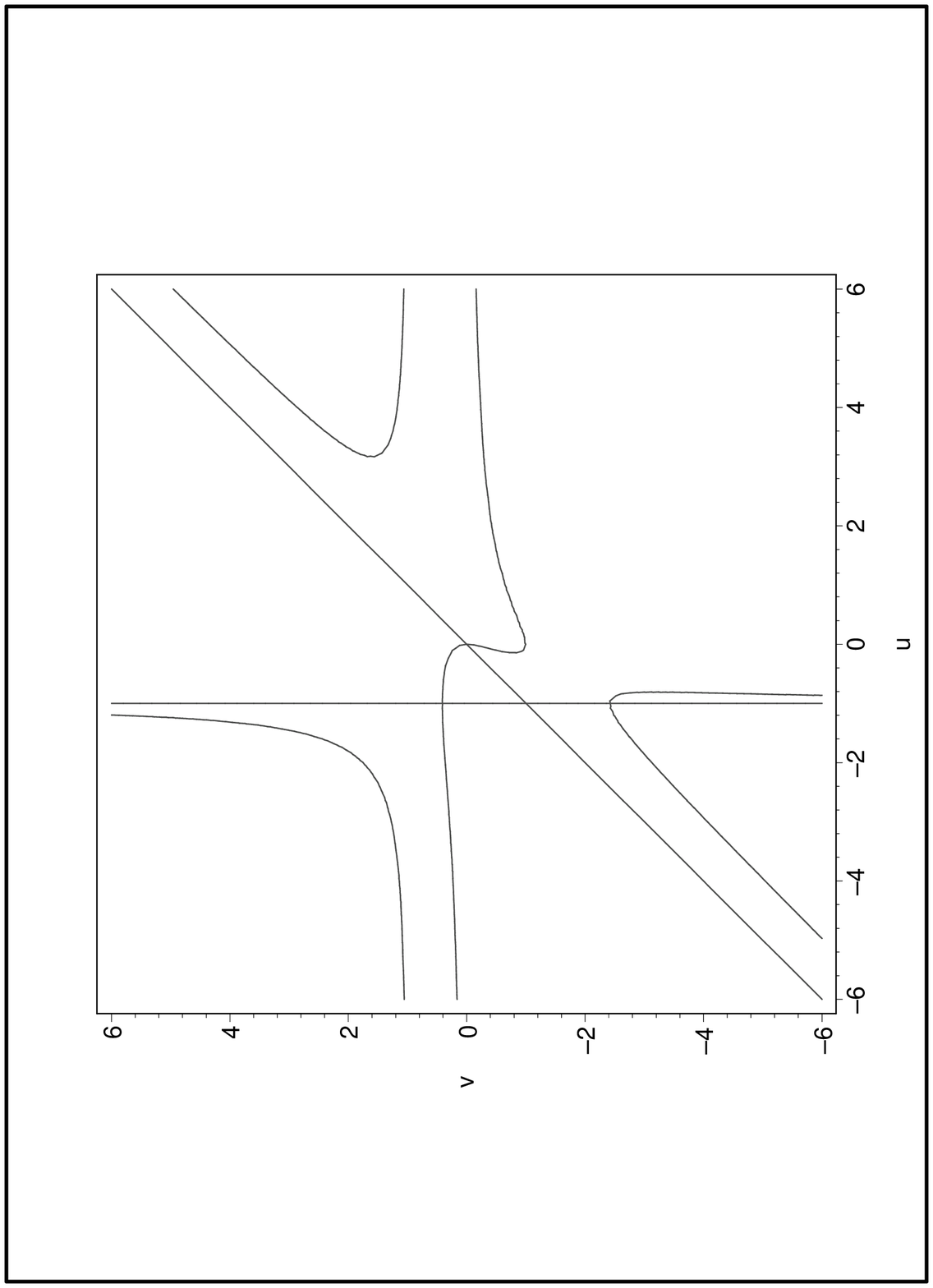}
\caption{Curves of period $1$, $2$ and $4$ orbits on the $(u,v)$-plane.}
\label{kuva9.1}
\end{center}
\end{figure}
\clearpage

\clearpage

\begin{figure}[h!]
\begin{center}
\includegraphics[width=0.6\textwidth,angle=270]{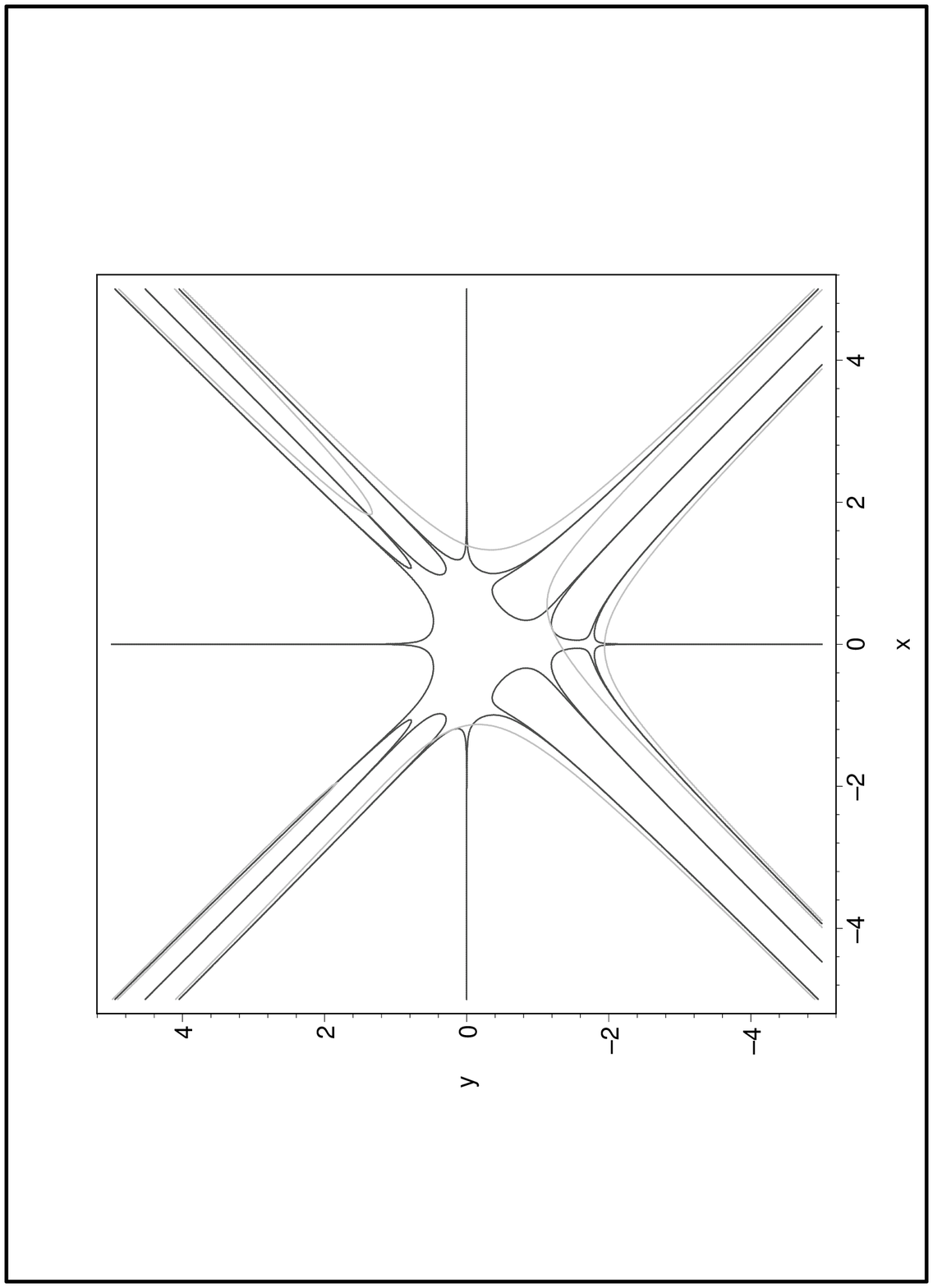}
\caption{Curves of period four orbits and eigenvalues $|\lambda_{4}(x,y)|=1$.}
\label{kuva9.3}
\end{center}
\end{figure}

\begin{figure}[h!]
\begin{center}
\includegraphics[width=0.6\textwidth,angle=270]{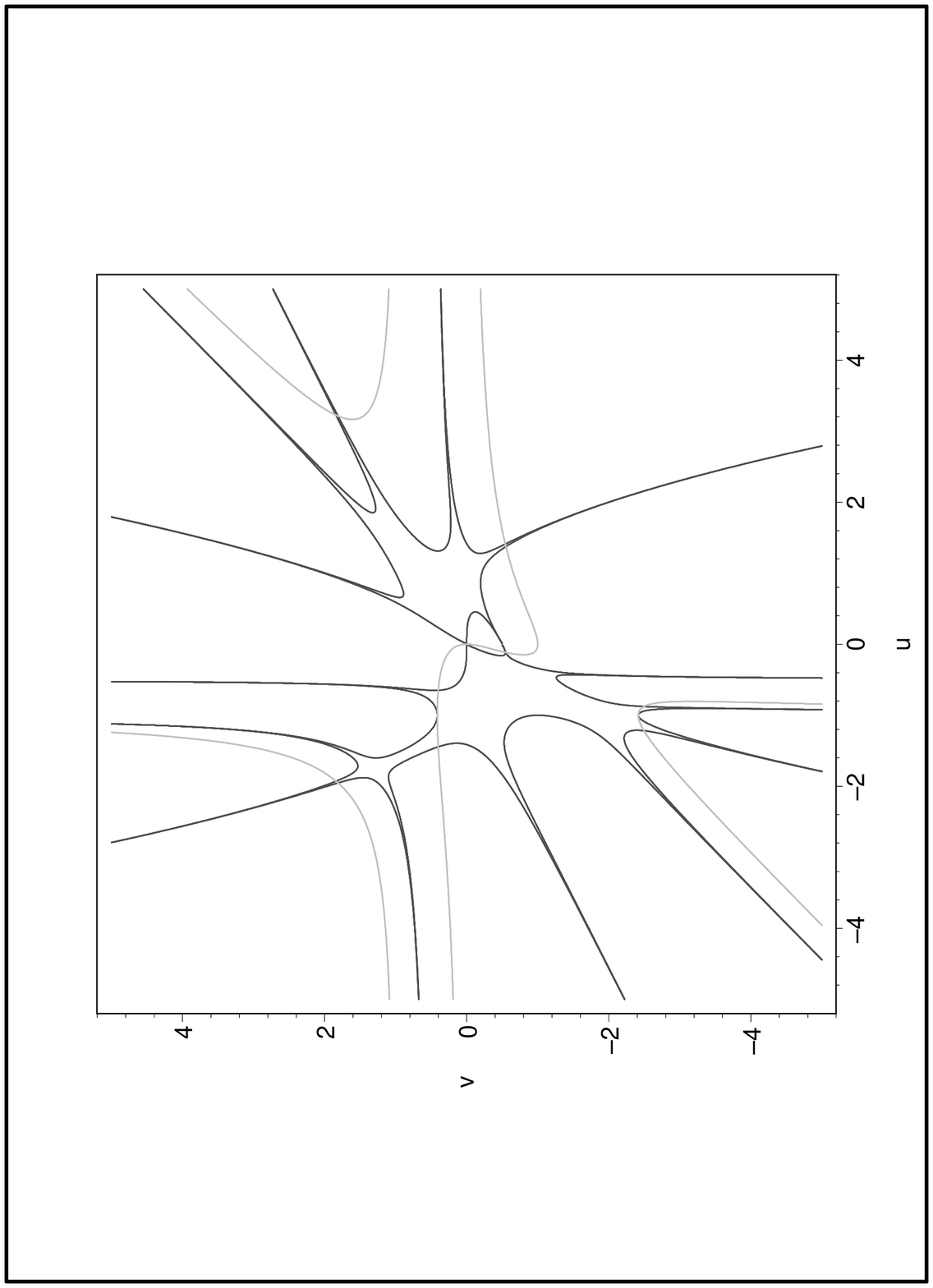}
\caption{Curves of period four orbits and eigenvalues $|\lambda_{4}(u,v)|=1$.}
\label{kuva9.4}
\end{center}
\end{figure}

\clearpage

\clearpage
\begin{figure}[h!]
\begin{center}
\includegraphics[width=0.6\textwidth,angle=270]{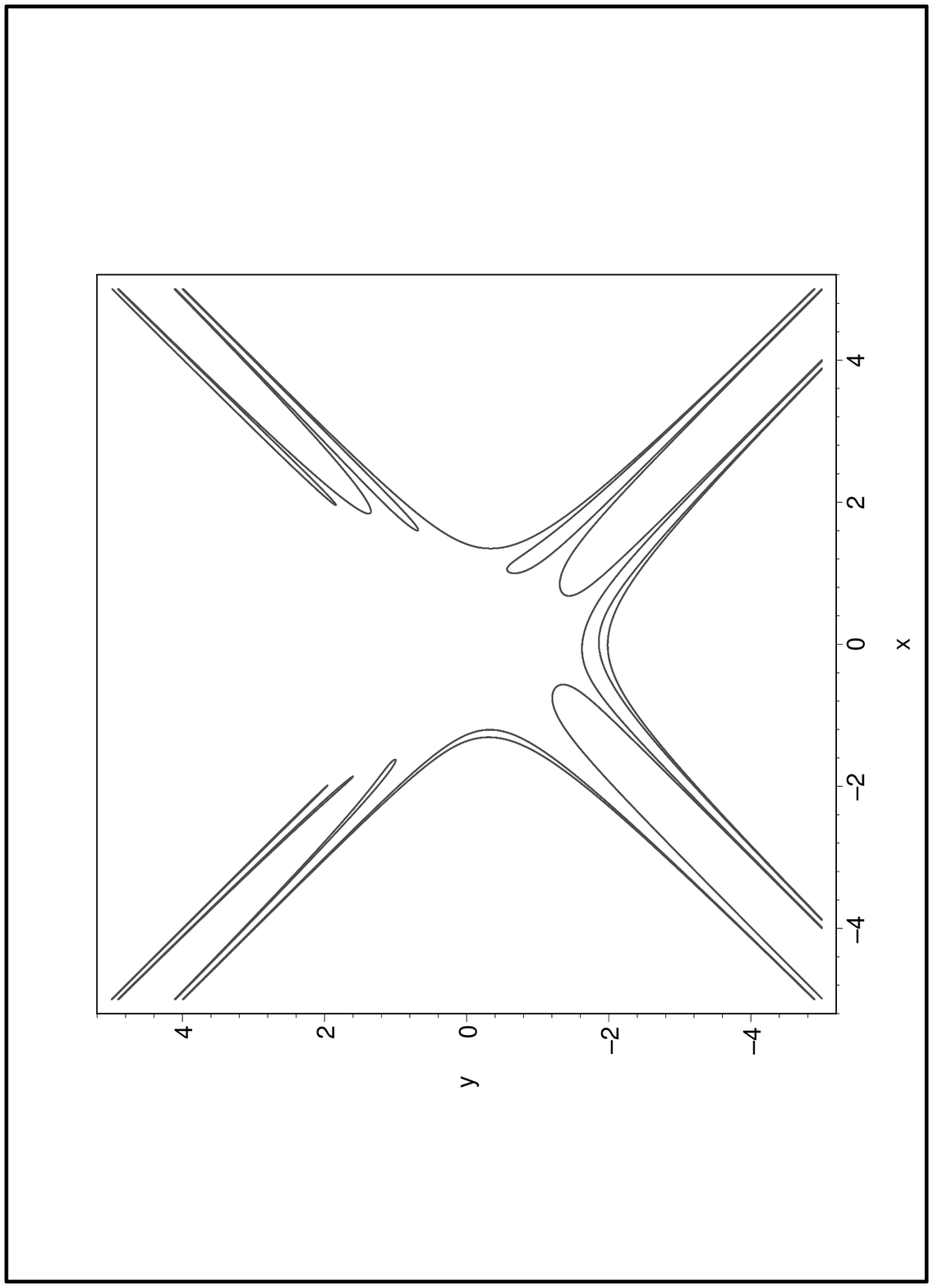}
\caption{Curves of period $5$ orbits on the $(x,y)$-plane.} \label{kuva13}
\end{center}
\end{figure}
\begin{figure}[h!]
\begin{center}
\includegraphics[width=0.6\textwidth,angle=270]{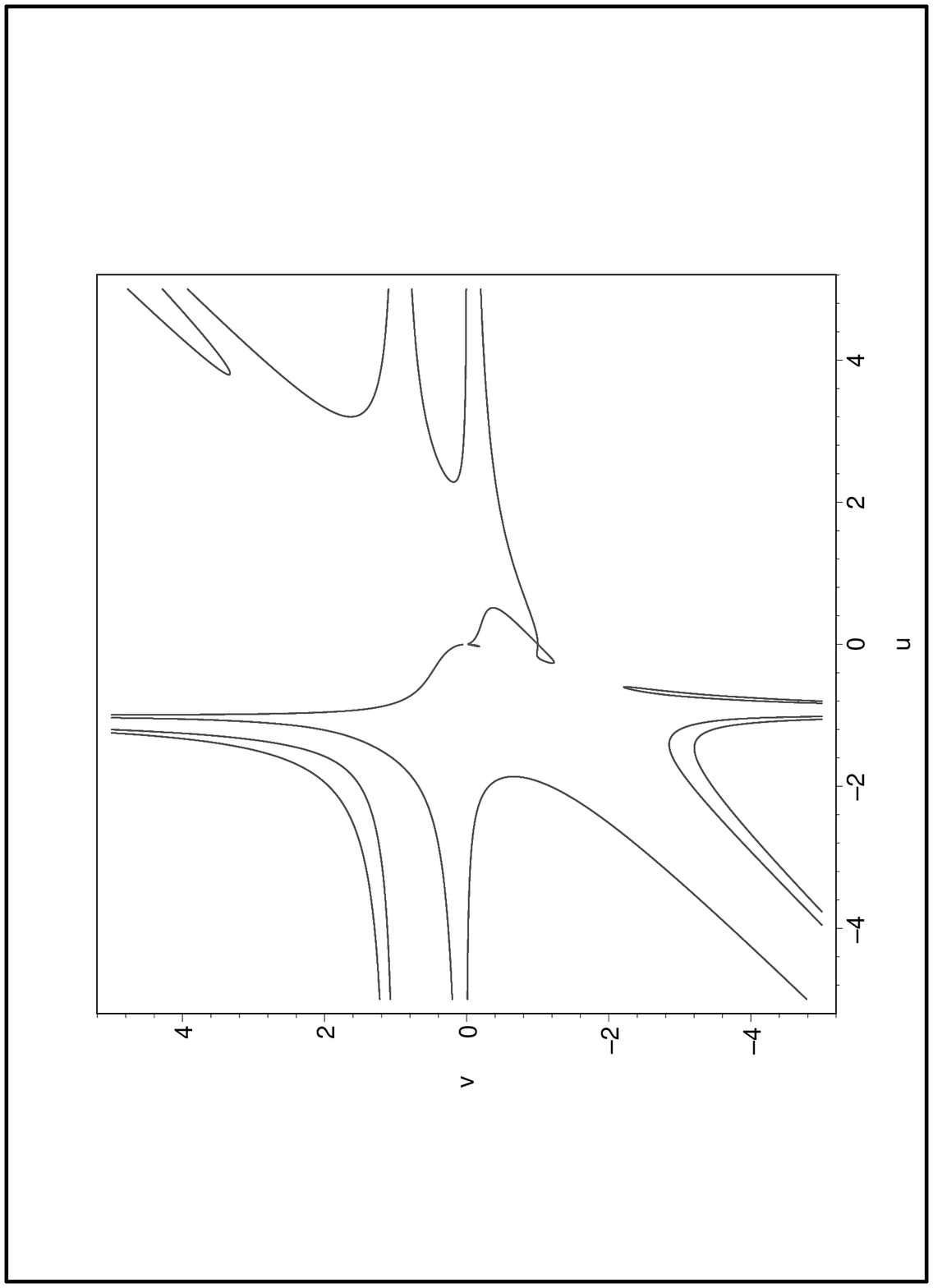}
\caption{Curves of period $5$ orbits on the $(u,v)$-plane.} \label{kuva14}
\end{center}
\end{figure}
\clearpage

\clearpage

\begin{figure}[h!]
\begin{center}
\includegraphics[width=0.6\textwidth,angle=270]{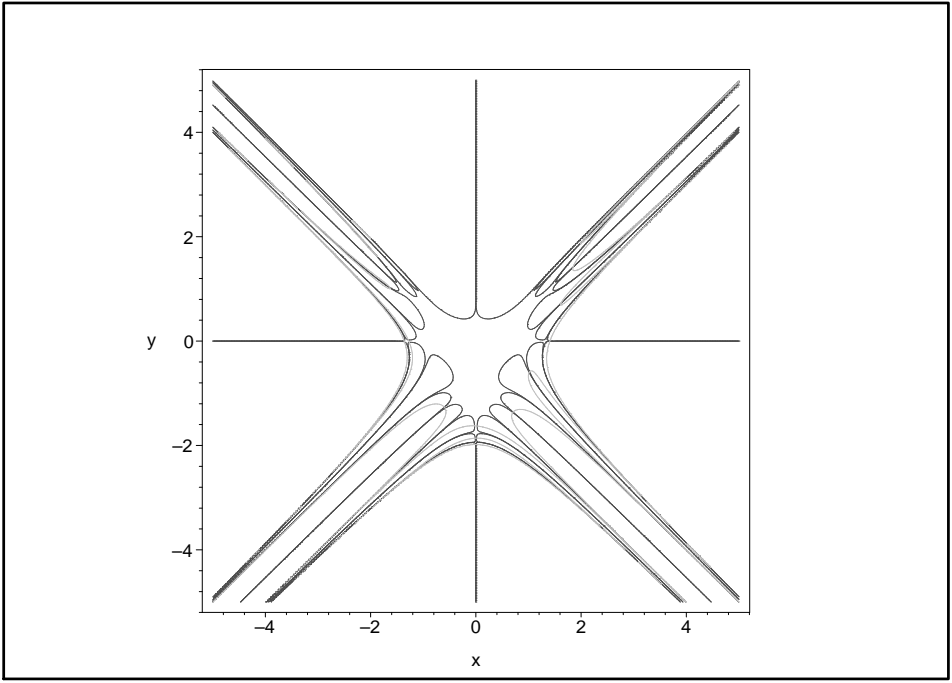}
\caption{Curves of period five orbits and eigenvalues $|\lambda_{5}(x,y)|=1$.}
\label{kuva10.11}
\end{center}
\end{figure}
\begin{figure}[h!]
\begin{center}
\caption{Curves of period five orbits and eigenvalues $|\lambda_{5}(u,v)|=1$.}
\label{kuva10.1}
\end{center}
\end{figure}
\clearpage

\begin{figure}[h!]
\begin{center}
\caption{Curves of period $1-5$ orbits on the $(x,y)$-plane.} \label{kuva16}
\end{center}
\end{figure}
\begin{figure}[h!]
\begin{center}
\includegraphics[width=0.6\textwidth,angle=270]{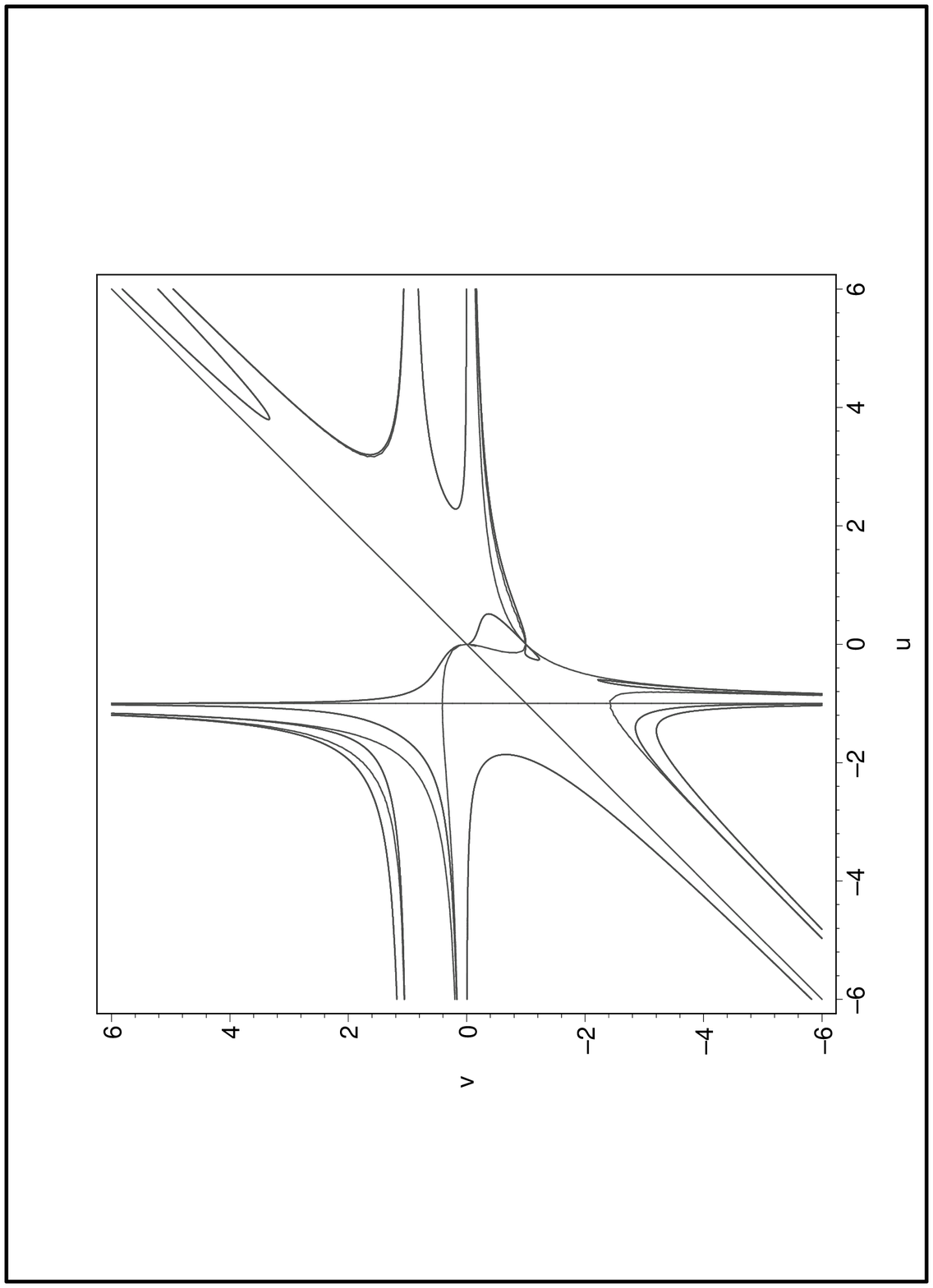}
\caption{Curves of period $1-5$ orbits on the $(u,v)$-plane.} \label{kuva17}
\end{center}
\end{figure}
\clearpage

\clearpage
\begin{figure}[h!]
\begin{center}
\includegraphics[width=0.55\textwidth]{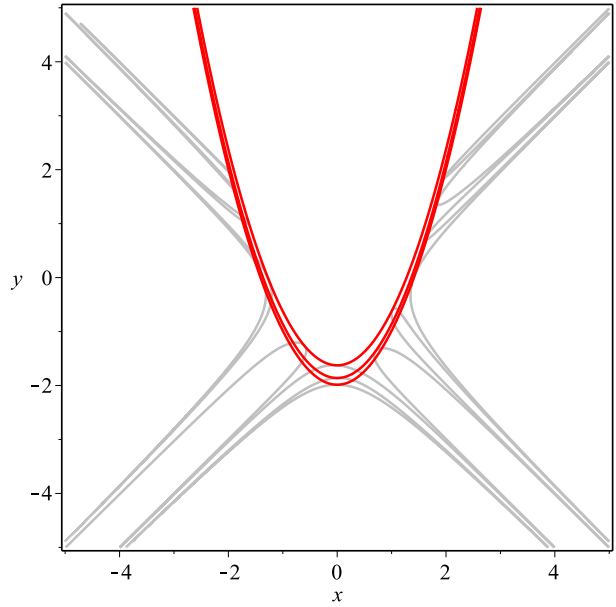}
\caption{Curves of period $5$ orbits and curves $c(x,y)=a$, when $a$ is $-1.62541372512376$, $-1.86078252220400$ and $-1.98542425305466$ on the $(x,y)$-plane.} \label{kuva130}
\end{center}
\end{figure}
\begin{figure}[h!]
\begin{center}
\includegraphics[width=0.55\textwidth]{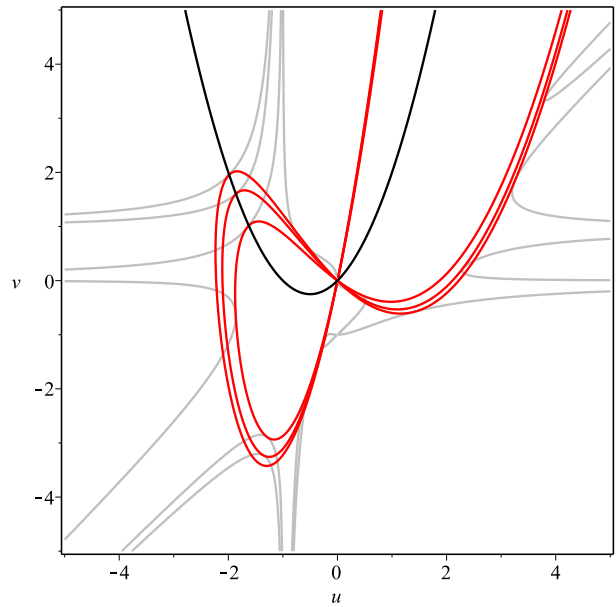}
\caption{Curves of period $5$ orbits, critical curve $v=u^2+u$ and curves $c(u,v)=a$, when $a$ is $-1.62541372512376$, $-1.86078252220400$ and $-1.98542425305466$ on the $(u,v)$-plane.} \label{kuva140}
\end{center}
\end{figure}

\clearpage

\begin{figure}[h!]
\begin{center}
\caption{The bifurcation diagram of the function $F(x,y)=(y,y^{2}+y-x^{2})$.}
\label{kuva18}
\end{center}
\end{figure}

\begin{figure}[h!]
\begin{center}
\caption{The bifurcation diagram of the function $G(u,v)$.}
\label{kuva19}
\end{center}
\end{figure}

\end{document}